\documentclass[12pt]{amsart}

\usepackage{graphicx}
\usepackage{eucal}
\usepackage{xcolor}
\usepackage{amsfonts,amssymb,amscd}
\usepackage[all,knot]{xy}
\usepackage{tikz}
\usepackage{braids}

\newtheorem{theorem}{Theorem}[section]
\newtheorem{lemma}[theorem]{Lemma}
\theoremstyle{definition}
\newtheorem{definition}[theorem]{Definition}

\newtheorem{proposition}[theorem]{Proposition}
\newtheorem{corollary}[theorem]{Corollary}
\newtheorem{observation}[theorem]{Observation}
\newtheorem{conjecture}[theorem]{Conjecture}
\newtheorem{problem}[theorem]{Problem}

\theoremstyle{remark}

\numberwithin{equation}{section}
\newcommand{\Rib}{\operatorname{Rib}}
\newcommand{\Rop}{\operatorname{Rop}}
\usepackage{graphicx}
\usepackage{hyperref}
\usepackage{geometry}
 \geometry{
 a4paper,
 total={170mm,257mm}, 
 left=20mm,
 top=20mm,
 }
\usepackage[all]{xy}

\begin{document}
\title{Profiles of Critical Flat Ribbon Knots}
\author{Jos\'{e} Ayala}
\address{Universidad de Tarapac\'a, Iquique, Chile} 
\subjclass[2020]{57K10, 49Q10, 53C42}
\keywords{knots, physical knots, ribbons, ribbonlength, ropelength}
\maketitle
\baselineskip=20 true pt
\maketitle \baselineskip=1.1\normalbaselineskip

\begin{abstract} \maketitle \baselineskip=1.2\normalbaselineskip 
 
 The main open problem in geometric knot theory is to provide a tabulation of knots based on an energy criterion, with the goal of presenting this tabulation in terms of global energy minimisers within isotopy classes, often referred to as ideal knots. Recently, the first examples of minimal length diagrams and their corresponding length values have been determined by Ayala, Kirszenblat, and Rubinstein. This article is motivated by the scarcity of examples despite several decades of intense research. Here, we compute the minimal ribbonlength for some well-known knot diagrams, including the Salomon knot and the Turk’s head knot. We also determine the minimal ribbonlength for the granny knot and square knot using a direct method. We conclude by providing the ribbonlength for infinite classes of critical ribbon knots, along with conjectures aimed at relating ribbonlength to knot invariants in pursuit of a metric classification of knots.
 \end{abstract}

\section{Introduction} \label{intro}

What is the minimum length required for a strip of constant width to form a knot diagram? This question was asked by Kauffman \cite{kauffman}, wherein he proposed a model involving a folded flat ribbon characterised by a piecewise linear core. He asked for the least ribbonlength (the ratio of length to width) needed for the trefoil and figure-eight knot. Despite the efforts, these remain open; in fact, the unknot is the only knot whose ribbonlength has been explicitly computed. Other fundamental questions about folded ribbons remain open, such as the existence of minimisers. Is minimal folded ribbonlength achieved by the minimal crossing number? For an overview of folded ribbon knots, see \cite{denne1}.

A flat ribbon knot is the embedding of an annulus into the Euclidean plane, where its core corresponds to an immersed knot diagram. In \cite{ribbon1}, we solved the ribbonlength problem for certain immersed flat ribbon knots and links. We also proved the existence of minimisers. Therein, we computed the minimal ribbonlength and obtained the ideal shape of the Hopf link and certain infinite families of link diagrams. So far, the only immersed flat ribbon knot diagrams of known minimal ribbonlength are the standard unknot and the trefoil.

The 3-dimensional counterpart of the ribbonlength problem, the ropelength problem, aims to find the least ratio of length to thickness required to tie a given knot. Several models have been proposed. Again, the unknot, which is minimised by the round circle, is the only knot whose ropelength has been explicitly computed under all known models. For links, certain planar chains are minimised by linked piecewise $C^2$ loops \cite{sullivan1}. Despite the extensive theory, examples remain scarce. One recent example is provided in \cite{kusner1}. This lack of examples is one of the motivations for this paper.

To partially solve the ribbonlength problem, we have embedded the space of ribbon knots and links into a larger, more manageable space of disk diagrams. The key point is that when length minimisers in the disk diagram space conform to the ribbon type, they solve the ribbonlength problem—though this may not always be the case.

  In this article, we establish a simple variational criterion to check local criticality (see Lemma \ref{lemcomp}). We also find the exact minimal ribbonlength values for the standard diagrams of the Salomon, Granny, Square, Turk's head, and the connected sums of Salomon knots. We note that the square lattice, together with Lemma \ref{lemcomp}, provides a suitable framework for verifying the existence and computing the ribbonlength of certain critical diagrams. The strategy consists of starting with a suitable initial diagram so that a gradient descent leads to a realisation that can be verified to be ribbonlength critical, and sometimes even minimal. In Figure \ref{fig:sal1}, we show the Salomon link and its corresponding ribbonlength minimiser diagram obtained in Theorem \ref{sal1}. In the second row of Figure \ref{fig:sal1}, we show the second element in one of the families considered here, whose ribbonlength is computed in Theorem \ref{door}. For the convenience of the reader, in Section \ref{ribbons}, we provide a brief review of basic definitions from \cite{ribbon1}. In Section \ref{minrib}, we prove minimality for an infinite family of diagrams called the rectangular family (Theorem \ref{rect}). In Theorem \ref{door}, we establish criticality for another infinite family. We conclude by presenting conjectures aimed at relating ribbonlength to knot invariants in pursuit of a metric classification of knots.

\begin{figure} [h!]
\centering
\includegraphics[width=.6\textwidth]{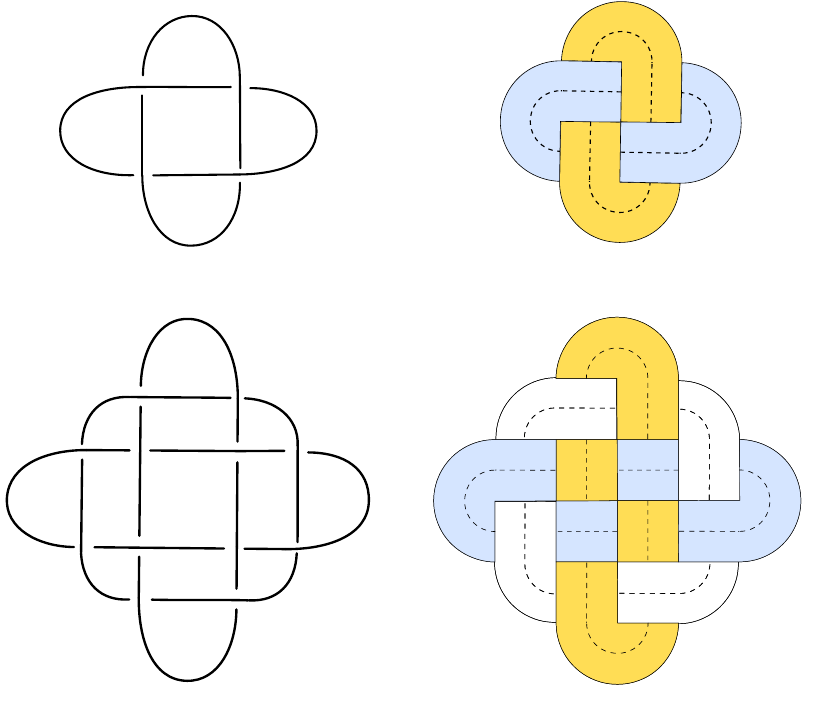}
\caption{Above: The Salomon knot has a minimal ribbonlength of \(2\pi + 8\). The dashed trace represents the core of the ribbon. Below: The second element of an infinite family of diagrams described in Section \ref{infinite}.}
\label{fig:sal1}
\end{figure}

The diagrams considered here are related to several planar representations of knots, which have been used across various interconnected contexts. For instance, Lomonaco and Kauffman presented a system of tiles to define quantum knot system \cite{kauffman2}. Also, Grosberg and Nechaev \cite{grosberg}, and Nechaev \cite{nechaev}, studied lattice knots and their interrelations with the Potts model in statistical mechanics. Additionally, Even-Zohar, Hass, Linial, and Nowik \cite{chaim} showed the existence of certain universal lattice diagrams that can represent all knots. Bounding ribbonlength may benefit from these previous models.

\section{Spaces of Immersed Flat Ribbon Knots and Links} \label{ribbons}

To develop a theory for immersed flat ribbon knots, we had to address several technicalities. For the convenience of the reader, we informally discuss the main definitions and some of their important consequences. For precise definitions and statements, please refer to \cite{ribbon1}.

The ribbons are modelled as the tubular neighbourhood of and immersed $C^1$ planar loop $\gamma$. These are given by $\Gamma :S^1 \times (-1,1) \to \mathbb R^2$ when $\Gamma(s,t)=\gamma(s)+tN(s)$, where $N(s)$ is a continuous choice of normal vectors to $\gamma$ so that $\Gamma(s,0)=\gamma(s)$, for $s\in S^1$ is satisfied. Here $\Gamma$ will be interpreted as the immersion of an open annulus.

The {\bf separation bound} is about maintaining apart from itself the space around the core $\gamma$ of the ribbon. It also ensures that the curve does not loop back too closely. This condition keeps the ribbon well-behaved (see \cite{ribbon1}).

The {\bf crossing condition} establish that $\gamma$ only has well behaved crossings, these being double points or double intervals. Near each crossing, the curve behaves as expected. This definition is made to avoid typical pathological behaviour of immersions and also to obtain Theorem \ref{doublepoint} which gives an important finiteness result.

A $C^1$ immersed loop $\gamma: S^1\to\mathbb R^2$ satisfying the separation bound for a collection of closed intervals covering $S^1$ and the crossing condition is called a {\bf ribbon loop}. The immersed open annular neighbourhood $\Gamma(S^1 \times (-1,1))$ of a ribbon loop $\gamma(S^1)$ is called the ribbon associated to the loop. A finite collection of $C^1$ loops is called a ribbon link. The space of these objects has the standard $C^1$ topology. 

A $C^1$ immersed planar ribbon loop is said to satisfy the {\bf double point condition} if it has finitely many isolated double points or double intervals.

\begin{theorem}\label{doublepoint} Ribbon loops and links satisfy the double point condition. 
\end{theorem}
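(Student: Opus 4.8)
The plan is to argue by a localisation-and-compactness scheme, reducing the global finiteness claim to a local normal form supplied by the crossing condition. Since a ribbon link is a finite union of ribbon loops, and a double point between two distinct component loops can be analysed exactly as a self-intersection of a single loop, it suffices to establish the statement for a single ribbon loop $\gamma\colon S^1\to\mathbb R^2$ and then take finite unions over the loops and over the pairs of loops.

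First I would exploit the separation bound to localise. By hypothesis it holds for a finite collection of closed intervals $\{I_1,\dots,I_m\}$ covering $S^1$. The content of that bound is that on each $I_j$ the curve stays apart from itself and does not loop back too closely, so that $\gamma|_{I_j}$ is an embedded $C^1$ arc; in particular no double point or double interval can have both of its preimages inside a single $I_j$. Consequently every self-intersection is witnessed by a pair of indices $(j,k)$ with $j\neq k$, and since there are only finitely many such pairs it is enough to bound, for each fixed pair, the number of double points and double intervals of $\gamma|_{I_j}$ against $\gamma|_{I_k}$, that is, the structure of the intersection set of two compact embedded arcs.

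Next I would bring in the crossing condition to analyse a single such pair. Consider the parameter set $Z_{jk}=\{(s,t)\in I_j\times I_k : \gamma(s)=\gamma(t)\}$, which is compact since $\gamma$ is continuous and the $I_j$ are closed. Suppose $Z_{jk}$ contained infinitely many points that are not interior to a double interval; then it would possess an accumulation point $(s_\ast,t_\ast)$, itself a self-intersection. At $(s_\ast,t_\ast)$ the crossing condition supplies a local normal form: in a neighbourhood the two strands either cross transversally, forcing an isolated double point with no further intersection parameters nearby, or they coincide along an interval and separate in a controlled $C^1$ fashion at its endpoints, yielding a genuine double interval whose endpoints are likewise isolated from other intersections. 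Either alternative contradicts the accumulation, so $Z_{jk}$ consists of finitely many isolated points together with finitely many maximal arcs, the latter being the double intervals. Taking the finite union over all pairs $(j,k)$ and over all component loops then delivers the double point condition.

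I expect the crux to be the precise deduction of this local normal form from the crossing condition at an accumulating self-intersection, and in particular the exclusion of a pathological accumulation \emph{along the diagonal} $s_\ast=t_\ast$, where a sequence of distinct crossings collapses onto a single parameter value. This degenerate case is exactly what the separation bound is designed to forbid, since it prevents $\gamma$ from looping back arbitrarily close to itself. The careful interplay between the $C^1$ regularity of $\gamma$, the embeddedness on each $I_j$, and the quantitative separation estimate is where the substantive work lies; the remaining compactness bookkeeping is routine.
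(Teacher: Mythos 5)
You should know at the outset that this paper contains no proof of Theorem \ref{doublepoint} to compare against: Section \ref{ribbons} is explicitly a review (``for precise definitions and statements, please refer to \cite{ribbon1}''), and the theorem is stated there without proof as a result imported from \cite{ribbon1}. So your attempt can only be judged on its own terms, against the informal descriptions of the separation bound and crossing condition that this paper actually supplies.

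On those terms, your architecture --- reduce to a single loop, localise via the finite cover, take the compact parameter set $Z_{jk}=\{(s,t): \gamma(s)=\gamma(t)\}$, and kill accumulation points with a local normal form --- is the natural route to such a finiteness statement. One small improvement: the diagonal case needs no separation bound at all, since a $C^1$ immersion of the compact $S^1$ has a uniform injectivity radius, so pairs $(s_n,t_n)$ with $s_n\neq t_n$ and $\gamma(s_n)=\gamma(t_n)$ satisfy $|s_n-t_n|\geq\varepsilon$ and cannot collapse onto the diagonal. The genuine gap sits exactly where you place the crux: you \emph{assert} that the crossing condition forces each crossing to be isolated from all other self-intersections (a transverse double point, or a double interval whose endpoints separate cleanly). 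Nothing in the paper's informal wording --- crossings are ``double points or double intervals'' and ``near each crossing, the curve behaves as expected'' --- delivers this: two $C^1$ arcs can intersect in a convergent sequence of points together with its limit, and each such point is still a double point, so without a precise clause enforcing transversality or local isolation, your accumulation-point contradiction does not go through. As written, you have read into ``behaves as expected'' precisely the conclusion you need, which makes that step circular; the same caveat applies to your claim that the separation bound makes each $\gamma|_{I_j}$ embedded. Your reading is very plausibly the intended one --- the paper itself says the crossing condition ``is made \ldots to obtain Theorem \ref{doublepoint}'' --- but completing the argument requires the precise definitions from \cite{ribbon1}, which neither you nor this paper provides.
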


\begin{proposition} \label{disk} Each connected complementary region of a ribbon loop or link contains an open disk with radius 1.
\end{proposition}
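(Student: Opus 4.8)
The plan is to reduce the statement to a lower bound on the inradius of the complementary regions and then to extract that bound from the separation condition. Since the boundary of any connected complementary region $R$ of the core $\gamma$ is contained in $\gamma(S^1)$, an open disk of radius $1$ lies inside $R$ if and only if $R$ contains a point $p$ with $\operatorname{dist}(p,\gamma(S^1))\ge 1$: the open ball $B(p,1)$ then misses $\gamma(S^1)$ and, being connected and containing $p\in R$, is forced to lie in $R$. For the unique unbounded region this is immediate, so I would fix a bounded region $R$, consider the continuous function $d(x)=\operatorname{dist}(x,\gamma(S^1))$ on the compact set $\overline{R}$, and let $p$ be a maximiser, with $r=d(p)>0$. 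The whole claim is then equivalent to proving $r\ge 1$.

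Next I would analyse the geometry of the inscribed disk $B(p,r)$, which is internally tangent to $\gamma$ at its foot point(s). First-order optimality of $p$, namely that the radius cannot be increased by a small displacement of the centre, leaves only two possibilities. Either $p$ has a single foot point $\gamma(s_0)$ and the tangency is of second order, in which case $p$ is the centre of curvature of $\gamma$ at $s_0$ and $r$ equals the radius of curvature there; or $p$ admits at least two distinct foot points whose directions from $p$ positively span the plane, so that two distinct sheets of the normal push-off of $\gamma$ meet at $p$.

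The separation bound is tailored precisely to handle both cases. In the curvature case, the bound prevents $\gamma$ from looping back too closely, which amounts to a lower bound of $1$ on its radius of curvature; hence $r\ge 1$. In the two-foot-point case, the foot points $\gamma(s_1)\ne\gamma(s_2)$ satisfy $|\gamma(s_1)-\gamma(s_2)|\le 2r$, and they realise a self-approach of the core that is not accounted for by a crossing; the separation bound keeps such strands at distance at least $2$, forcing $2r\ge 2$ and again $r\ge 1$. Combining the two cases yields $r\ge 1$, whence $B(p,1)\subseteq R$.

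I expect the genuine obstacle to be the bookkeeping around crossings in the two-foot-point case: one must be sure that the two foot points are governed by the separation bound and are not exempted because they lie on adjacent arcs or near a double point or double interval, and one must rule out the maximiser hiding in a thin wedge at a crossing corner where the naive self-distance can be small. This is where I would lean on the precise formulation of the separation bound for a covering of $S^1$ by closed intervals, together with the crossing condition, and on Theorem \ref{doublepoint}, which guarantees only finitely many double points and double intervals. With finiteness in hand one can excise small neighbourhoods of the crossings, verify directly that a disk nestled in a crossing wedge can always be enlarged into the interior of $R$ until it meets a non-adjacent strand, and thereby confine the maximiser to a location where the clean self-distance bound of $2$ applies.
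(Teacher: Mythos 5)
The paper itself gives no proof of this proposition; it is recalled as background from \cite{ribbon1}, where the separation bound and crossing condition are defined precisely, so your attempt has to be judged against what those definitions can actually deliver. Your opening reduction is correct and is the natural first step: $R$ contains an open unit disk if and only if some $p\in R$ has $\operatorname{dist}(p,\gamma(S^1))\ge 1$, and one may take $p$ to be a maximiser of the distance function on $\overline{R}$. But the case analysis that follows does not hold up. The ``curvature case'' is ill-posed: ribbon loops are only $C^1$, so there is no radius of curvature to bound; the correct surrogate is the tangent-unit-disk form of the separation bound. (In fact this case is vacuous: at an interior maximum of the distance function the directions from $p$ to its foot points cannot all lie in an open half-plane, and a single direction always does.) The genuine gap is in the two-foot-point case. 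The separation bound is imposed interval by interval on a finite covering of $S^1$; it is a local condition and cannot give your assertion that it ``keeps such strands at distance at least $2$'' --- strands that cross are at distance $0$. When both foot points lie in one covering interval the tangent-disk condition does finish the job: if $r<1$, the open unit disk tangent to $\gamma$ at $\gamma(s_1)$ on the side of $p$ contains $\gamma(s_2)$ (by the triangle inequality $|c_1-\gamma(s_2)|\le (1-r)+r=1$, with equality forcing $\gamma(s_2)=\gamma(s_1)$), contradicting the separation bound. But when the foot points lie on different strands --- exactly the situation for a bigon or wedge region at a crossing, for nearby parallel strands in different covering intervals, or for two components of a link --- the condition you invoke says nothing, and that is where the entire content of the proposition lives.

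Your closing paragraph concedes this, but the proposed repair is not a proof: the claim that a disk nestled in a crossing wedge ``can always be enlarged into the interior of $R$ until it meets a non-adjacent strand'' is, for such regions, equivalent to the proposition itself, and no mechanism is offered for why the enlargement cannot be blocked while the disk is still small. A test case shows that the hypotheses you actually use cannot suffice: take two unit circles whose centres are at distance $d\in(0,2)$, so that they overlap in a thin lens. Each circle separately satisfies the tangent-disk separation bound (both tangent unit disks at any point of a round unit circle are disjoint from that circle), all crossings are transversal double points and finite in number, yet the lens region has inradius at most $(2-d)/2<1$ and no inscribed disk in it can be enlarged at all. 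This configuration is of course not a ribbon link under the full definitions of \cite{ribbon1}; whatever excludes it is precisely the inter-strand content of the separation bound and crossing condition there, and that content is what your argument never brings to bear. As written, your proof is complete only in the case where both foot points of the maximal inscribed disk lie in a single covering interval, and assumes the conclusion in the remaining cases.
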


With these two results about ribbon loops we soon introduce a bigger space that contains the space of all ribbons. In this space, called disk space, we look for length minimisers provided they satisfy the conditions to be ribbon diagrams.

The {\bf weak separation bound} means that every point on the curve has two circles touching the curve lying on opposite sides of the curve i.e the curve is not bending so tightly. This condition keeps the curve from having sharp bends or being too ``thin" ensuring that the curve has smoothness and thickness throughout.

The  {\bf disk space} $\mathcal D$ is the space of finite collections of $C^1$ immersed planar loops satisfying the {double point condition}, {the weak separation bound}, and its elements are such that each complementary region contains an open disk with radius 1. $\mathcal D$ has the standard $C^1$ topology. Note that a ribbon loop is an element is disk space $\mathcal D$ that satisfy the separation bound.

A {\bf ribbon knot diagram} is a finite collection of ribbon loops with under/over information as its crossings.

\begin{theorem}\label{minimalribbon}
Suppose that $\gamma$ is a ribbon knot diagram. Then minimal length ribbon representatives of $\gamma$ exist.
\end{theorem}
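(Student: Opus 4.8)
The plan is to run the direct method of the calculus of variations inside the disk space $\mathcal{D}$, using it as a compactification of the class of ribbon representatives of $\gamma$. Write $\mathcal{R}$ for the set of all ribbon representatives realising the given diagram and set $L=\inf_{\beta\in\mathcal{R}}\mathrm{length}(\beta)$. First I would observe that $L$ is finite and strictly positive: it is bounded above by $\mathrm{length}(\gamma)$ itself, and bounded below because Proposition \ref{disk} forces each complementary region to contain an open disk of radius $1$, so no representative can be arbitrarily short. I would then choose a minimising sequence $\gamma_n\in\mathcal{R}$ with $\mathrm{length}(\gamma_n)\to L$, reparametrise each $\gamma_n$ by constant speed on $S^1$ (so the speed equals $\mathrm{length}(\gamma_n)/2\pi$ and is uniformly bounded), and normalise by a translation so that the images stay in a fixed compact region of the plane.

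The compactness step is where the geometric hypotheses do the work. The weak separation bound provides, at every point of the curve, two tangent circles of radius $1$ lying on opposite sides; this bounds the curvature by $1$ in the weak (Lipschitz) sense, so that the unit tangent fields $T_n$ are uniformly Lipschitz in arclength while $\gamma_n$ and $\gamma_n'$ remain uniformly bounded. By the Arzel\`a--Ascoli theorem I would extract a subsequence converging in $C^1$ to a limiting collection of loops $\gamma_\infty$.

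Next I would verify that $\gamma_\infty$ is admissible, i.e. that $\gamma_\infty\in\mathcal{D}$. The weak separation bound is a closed condition under $C^1$ convergence, and the radius-$1$ disk condition passes to the limit since the disk centres lie in a compact set and may be extracted convergently; the double point condition follows from Theorem \ref{doublepoint} once one checks that the crossing structure is preserved. Since length is lower semicontinuous under uniform convergence, one then obtains $\mathrm{length}(\gamma_\infty)\le\liminf_n \mathrm{length}(\gamma_n)=L$.

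The hard part will be the last two points: showing that the $C^1$ limit neither changes the diagram type nor leaves the ribbon class. Two distinct strands meeting transversally could in principle become tangent in the limit, merging or annihilating a crossing, so I would invoke the radius-$1$ disk condition of Proposition \ref{disk} --- which keeps every complementary region genuinely open --- to rule out such degenerations and to certify that $\gamma_\infty$ realises the same diagram as $\gamma$. Finally one must promote $\gamma_\infty$ from a disk-space element to a genuine ribbon by checking that the minimiser satisfies the full separation bound and not merely its weak form; with this in hand $\gamma_\infty\in\mathcal{R}$ and $\mathrm{length}(\gamma_\infty)=L$, so $\gamma_\infty$ is the desired minimal ribbon representative.
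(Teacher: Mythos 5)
A preliminary remark on the comparison you asked for: this paper never proves Theorem \ref{minimalribbon} at all. Section \ref{ribbons} explicitly recalls it, together with Theorems \ref{doublepoint}, \ref{ribcs} and Proposition \ref{disk}, from \cite{ribbon1}, so the only thing to compare against is the compactness argument given there. Your skeleton --- minimising sequence, constant-speed parametrisation, uniform $C^{1,1}$ control coming from the two-sided tangent unit disk condition, Arzel\`a--Ascoli, and semicontinuity of length --- is the right frame and matches that approach; those steps are routine and fine.

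The genuine problem is that you defer exactly the two steps in which the entire content of the theorem lives. First, your closing sentence, that ``one must promote $\gamma_\infty$ from a disk-space element to a genuine ribbon by checking the full separation bound,'' is not a step you carry out: it \emph{is} the theorem. The separation bound is formulated relative to a covering of $S^1$ by closed intervals which varies along the sequence, so closedness under $C^1$ limits requires extracting a limit covering (a uniform bound on the number of intervals and on their lengths from below) and verifying the bound for that covering; nothing in your argument addresses this. Without it you only know $\gamma_\infty \in \mathcal D$, and the space $\mathcal D$ is strictly larger than ribbon space --- that is the whole reason the second half of Theorem \ref{ribcs} is conditional, and Lemma \ref{lemcomp}(2) exhibits a family of disk-space curves terminating at a curve that violates the separation bound. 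Landing in $\mathcal D$ with length $L$ therefore does not finish the proof; it only reproves the first half of Theorem \ref{ribcs}. Second, the one concrete mechanism you do propose for diagram preservation, namely that Proposition \ref{disk} ``keeps every complementary region genuinely open,'' is insufficient. Consider a bigon whose two transverse crossings merge along the sequence: the bigon can balloon around its unit disk while its two boundary strands become tangent at the merged point, and two strands of curvature at most $1$ can meet tangentially with neither one entering the open unit disks tangent to the other (a unit circular arc tangent to a straight line is the extreme case). So neither the radius-one-disk condition nor the weak separation bound excludes this degeneration; what excludes it is precisely the full separation bound of the limit --- the curve may not loop back onto itself too closely without properly crossing. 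In other words, your step on preservation of the diagram depends on the step you left unproven, not on Proposition \ref{disk}, and until that closedness argument is supplied the proposal has a hole exactly where the theorem is hard.
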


A  diagram is called {\bf cs} if its core is a finite $C^1$ concatenation of arcs of circles of radius 1 and line segments. 

\begin{theorem}\label{ribcs} Length minimisers in each component of disk space exist and are $cs$ curves. Suppose that a length minimiser of a ribbon knot or link in disk space is ribbon. Then this disk space minimiser is also a length minimiser in ribbon space
\end{theorem}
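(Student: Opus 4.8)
The plan is to treat the two assertions separately, since the second is an immediate consequence of an inclusion of spaces once the first is established. I would begin by recording the two structural constraints that cut out a component of $\mathcal D$. The weak separation bound is in effect a two-sided curvature bound: the existence of interior tangent circles of radius $1$ on opposite sides of the curve at every point forces the (weak) curvature of any $\gamma \in \mathcal D$ to satisfy $|\kappa| \le 1$ wherever it is defined. The disk condition, that every complementary region contains an open unit disk, plays the role of a family of round obstacles that the core must bend around. These two features are precisely what single out line segments and unit-radius arcs, so they should be the engine of both parts of the statement.

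For existence, I would run a direct compactness argument within a fixed component. Fix the combinatorial type (the isotopy class of the diagram together with its crossing data), take a length-minimising sequence $\gamma_n$, and parametrise each by arc length; the total lengths are uniformly bounded along the sequence. The curvature bound $|\kappa|\le 1$ gives a uniform Lipschitz bound on the unit tangents, so by Arzel\`a--Ascoli a subsequence converges in $C^1$ to a limit loop $\gamma_\infty$. The main point is that the defining conditions of $\mathcal D$ pass to the limit: the weak separation bound and the unit-disk condition are closed under $C^1$ convergence (a $C^1$ limit of curves each admitting two-sided unit tangent circles again admits them, and the complementary unit disks converge to unit disks in the limiting complement, using Proposition \ref{disk}), while the double point condition and the combinatorial type survive because $C^1$-small perturbations neither create nor destroy transverse crossings. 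Hence $\gamma_\infty \in \mathcal D$ lies in the same component and attains the infimal length.

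The heart of the theorem is the regularity statement, that every such minimiser is a $cs$ curve, and this is where I expect the real work to be. I would argue locally along the minimiser, partitioning the core according to which constraint is active. On any open arc where neither constraint binds -- the curvature bound is strict and the curve touches no bounding unit disk -- an inward normal variation strictly decreases length unless the arc is already straight, so free arcs are line segments. Where the curvature constraint saturates, $|\kappa| = 1$, the minimiser is forced to be a unit-radius circular arc. Where the curve is in contact with a complementary unit disk, tangency with a radius-$1$ obstacle together with the curvature bound again pins the core to a unit-radius arc; this is precisely the Markov--Dubins alternative for shortest curvature-constrained paths, adapted to the planar ribbon setting. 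The delicate points are to rule out pathological accumulation of switching among the three regimes and to show the concatenation is $C^1$ with finitely many pieces; here I would invoke the double point condition (Theorem \ref{doublepoint}) and the compactness of the core to bound the number of contact intervals, and first-order optimality to force matching tangents at junctions. This finiteness is the main obstacle.

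Finally, for the comparison assertion I would use that ribbon space embeds in $\mathcal D$ as the subset satisfying the full separation bound, which is strictly stronger than the weak separation bound, so every ribbon representative is an element of $\mathcal D$ in the same component. Consequently $\inf_{\mathcal D} L \le \inf_{\mathrm{ribbon}} L$. If the disk-space minimiser $\gamma^\ast$ happens to be ribbon, then $\gamma^\ast$ competes in ribbon space, giving $L(\gamma^\ast) \ge \inf_{\mathrm{ribbon}} L$; combined with $L(\gamma^\ast) = \inf_{\mathcal D} L \le \inf_{\mathrm{ribbon}} L$ this forces equality, so $\gamma^\ast$ realises the ribbon-space minimum. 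Existence of that minimum is already guaranteed by Theorem \ref{minimalribbon}, which makes the conclusion clean and confirms that the disk-space relaxation loses nothing whenever its optimiser is of ribbon type.
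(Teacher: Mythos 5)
First, a point of reference: this paper never proves Theorem \ref{ribcs}. It is one of the results imported verbatim from \cite{ribbon1} (the paper explicitly defers all precise definitions and statements of Section \ref{ribbons} to that reference), so there is no in-paper proof to compare against; your proposal has to be judged on its own terms. Your final paragraph is correct and complete: ribbon space sits inside $\mathcal D$ (the separation bound implies the weak separation bound, and Proposition \ref{disk} supplies the complementary unit disks), hence $\inf_{\mathcal D} L \le \inf_{\mathrm{ribbon}} L$, and a disk-space minimiser that happens to be ribbon attains the ribbon infimum. That is exactly the logic the paper relies on when it uses this theorem, and it needs nothing more.

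The gaps are in the first two parts. In the existence step, your claim that the double point condition and the combinatorial type survive because ``$C^1$-small perturbations neither create nor destroy transverse crossings'' addresses the wrong phenomenon: persistence of the crossings you already have is not the issue. Along a minimising sequence, strands (and obstacle disks) that are disjoint in every $\gamma_n$ will in general come into contact in the limit --- that is what tightening means, and it is precisely why the crossing condition in this theory is formulated to allow double \emph{intervals}, not just double points. So the $C^1$ limit can acquire new tangencies, new double points, or whole intervals of contact, and you must prove that it still has only \emph{finitely} many of them (the double point condition) and that it has not dropped out of the component of $\mathcal D$ you started in; neither follows from your transversality remark. Second, in the regularity step you correctly identify the finiteness of the $cs$ decomposition --- no accumulation of switchings among segments, curvature-saturated arcs, and obstacle contacts --- as ``the main obstacle,'' but you do not overcome it: Theorem \ref{doublepoint} concerns self-intersections of ribbon loops, not junction points between circular and straight pieces nor the structure of contact sets with the complementary unit disks, so invoking it cannot bound the number of pieces. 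As written, the central assertion of the theorem --- that minimisers are \emph{finite} $C^1$ concatenations of unit-radius arcs and line segments --- is still unproven in your sketch.
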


{\bf Observation.} Note that our focus is on finding minimal length diagrams for standard diagrams of knots within the same combinatorial type. This approach contrasts with searching across all possible realisations of a knot.

\section{Criticality for Ribbonlength and Connections to Ropelength}

To quantify geometric complexity under a fixed width, we introduce the ribbonlength functional as a planar analogue of ropelength. Just as ropelength normalises the total length of a spatial curve by the radius of its enclosing tube, ribbonlength normalises the length of a planar diagram by the fixed width of the ribbon. This yields a scale-invariant measure of complexity of a diagram.

\begin{definition} The ribbonlength $\mbox{Rib}(\gamma)$ of a ribbon knot or link $\gamma$ is the ratio of the length of the core $\gamma$ to the width of the ribbon $\Gamma$,
$$\mbox{Rib}(\gamma)= \frac{\mbox{Length}(\gamma)}{\mbox{Width}(\Gamma)}= \frac{\mbox{Length}(\gamma)}{2}.$$
\end{definition}

A flat ribbon knot diagram is said to be {\bf{critical for ribbonlength}} if the directional derivative of the length functional $\ell({\bf x})$ vanishes for all directions ${\bf v}$ that preserve the ribbon constraint to first order
\[
D\ell({\bf v}) = \left.\frac{d}{dt} \ell({\bf x} + t{\bf v})\right|_{t=0} = 0.
\]
Here, ${\bf x} = (x_1, \ldots, x_{2n})$ denotes the centres of the $n$ disks assigned to the bounded planar regions of the diagram, and $D\ell({\bf v}) = \sum \hat{\bf u}_j \cdot {\bf v}$ represents the sum of directional contributions from the outward unit tangents $\hat{\bf u}_j$ at the arc–segment junctions of the moving disk. For a full derivation, see Section~5 in~\cite{ribbon1}. 

Explicitly, if only the $i$th disk moves along a smooth curve $t \mapsto {\bf c}(t)$ with velocity ${\bf v} = \frac{d{\bf c}}{dt}$ and all other disks are fixed, then:
\[
D\ell({\bf v}) = \lim_{t \to 0^+} \left[\ell(x_1, \ldots, x_{2i - 1} + c_1(t), x_{2i} + c_2(t), \ldots, x_{2n}) - \ell({\bf x})\right] = \sum_{j=1}^{2k} \hat{\bf u}_j \cdot {\bf v},
\]
where $2k$ denotes the number of (possibly degenerate) segments tangent to the $i$th disk. 

\subsection{Physical realisation of criticality.} A ribbonlength-critical diagram satisfies two intrinsically linked conditions: it is a $cs$ curve with balanced geometric tension at each vertex, and its associated ribbon $\Gamma$ locally minimises length while respecting the separation bound. The geometric balance arises as a consequence of our variational principle, unifying both aspects under a single physical mechanism.

\subsection{Connections to ropelength theory}
Our variational framework for ribbonlength closely mirrors the ropelength theory for thick knots in $\mathbb{R}^3$, both conceptually and technically as follows.

\noindent
 \textbf{Constraint-driven minimisation.} Both models prevent collapse through enforced constraints that guarantee geometric non-degeneracy. In ropelength, this is achieved via the normal injectivity radius, ensuring a minimum distance between strands of a spatial curve. In our ribbonlength model, the analogous role is played by the local separation bound. This condition ensures that the curve cannot collapse too closely to itself and corresponds to a local medial axis control.
 
 \noindent
\textbf{Tension balance and equilibrium.} In both settings, critical configurations represent equilibria under length minimisation constrained by thickness or width. For ropelength, this yields curvature-strut balance equations derived from first variation analysis in the space of embedded thick curves  \cite{sullivan2}. In the ribbonlength setting, the first variation is computed in the space $\mathcal{D}$ of disk diagrams, leading to a vertex tension balance condition at arc-segment junctions \cite{ribbon1}. Circular arcs with unit curvature play the role of constrained geometry, mirroring the bounded curvature segments in thick knot models.

\noindent
 \textbf{Length complexity bounds.} Both theories relate length to topological complexity, using length to control invariants such as crossing number. In ropelength, total length bounds quantities such as crossing number and bridge number \cite{sullivan1}. Our Theorem~7.1 in \cite{ribbon1} shows a comparable result in the planar context: the number of crossings in ribbon diagram is bounded in terms of the ribbonlength. This reflects a shared philosophy of using geometric cost to regulate diagrammatic or spatial complexity.

 \subsection{Gradient Descent}

In Section 5 of \cite{ribbon1}, we introduced a geometric gradient descent method to reduce the total length of \( cs \) curves. The goal is to search for critical configurations within the space \( \mathcal{D} \) of disk diagrams, while ensuring that the constraints are preserved throughout the deformation.

We model the diagram as a mechanical system, the disks represent rigid units, and the curve enclosing them acts like an elastic band. The descent process involves computing the first variation of the length functional as the centres of the disks are varied. Each \( cs \) segment of the diagram is decomposed into a sequence of geometric components, an arc of a unit circle, followed by a straight segment, and ending with another arc. For each such element, we compute a vector contribution \( \mathbf{u}_i \) associated to the transition points between circular and linear segments.

For example, in Figure~\ref{fig:gradsal}, the contributions acting on the disk \( D_1 \) sum to \( \mathbf{u}_1 + \mathbf{u}_2 \). These contributions define a local force vector acting on the centre of the disk. To descend in length, we perturb the position of the disk centre along a smooth path whose initial tangent is a vector \( \mathbf{v} \). If this perturbation preserves the ribbon constraint and the inner product \( \langle \mathbf{u}_1 + \mathbf{u}_2, \mathbf{v} \rangle \) is positive, then the motion produces a decrease in length.

This vector based framework provides a concrete geometric interpretation of the gradient flow on the configuration space of disks. The reader may find it helpful to keep this picture in mind while following the upcoming formal proofs. For precise definitions and detailed context, we refer the reader to \cite{ribbon1}.

\section{Computing Minimal Ribbonlength}\label{minrib}

\begin{lemma} \label{lemcomp}
Let $A, B, C, D$ be unit-radius disks with centres $a = (0, 0)$, $b = (2, 0)$, $c = (0, 2)$, and $d = (-2, 0)$. Consider the $scs$ curve of length $4 + \pi$ with fixed terminal points $P = (-1, 0)$ and $Q = (1, 0)$, and directions pointing orthogonally downward, as shown in Figure \ref{fig:comp}. Then,
\begin{enumerate}
    \item when rolling $C$ around $A$ while keeping the centres of all circles at least a distance of 2 apart, the $scs$ curve is locally length-minimal and satisfies the separation bound.
    \item when rolling $C$ around $A$ while keeping the centres of all circles at least a distance of 2 apart, the family of admissible perturbed $cs$ curves is monotonically length-increasing, achieving a maximum of $\frac{5\pi}{3} + 2$ at which the separation bound is not satisfied.
    \item under these $cs$ perturbations, there are two saddle points of length.
\end{enumerate}
\end{lemma}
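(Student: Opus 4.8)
The plan is to treat every assertion through the first-variation identity $D\ell(\mathbf v)=\sum_j \hat{\mathbf u}_j\cdot\mathbf v$ recorded before the lemma, specialised to the single moving disk $C$. I would first fix the coordinates of the statement and read off from Figure \ref{fig:comp} the $cs$-decomposition of the relevant curves: the initial configuration is the symmetric $scs$ curve built from two common tangent segments and the arc carried on $\partial C$, of total length $4+\pi$, and a roll of $C$ through an angle $\phi$ along the circle $|AC|=2$ deforms it into a nearby $cs$ curve whose pieces are again unit-radius arcs and straight segments. Recording $L(\phi)$ as the sum of the subtended arc angles (each arc has radius $1$, so its length equals its angle) and the segment lengths produces an explicit, piecewise-smooth function of the roll parameter, and the velocity of the centre of $C$ is $\mathbf v=2\dot\phi\,\mathbf t$, where $\mathbf t$ is the unit tangent to the rolling circle.

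For (1) and (2) the key point is that only the two junctions where the arc on $C$ meets its neighbouring segments contribute, so $L'(\phi)=(\hat{\mathbf u}_1+\hat{\mathbf u}_2)\cdot\mathbf v$. The configuration is invariant under the reflection $x\mapsto-x$, which interchanges the two senses of rolling; hence $L$ is even in $\phi$ and $L'(0)=0$, equivalently $\hat{\mathbf u}_1+\hat{\mathbf u}_2$ is vertical at $\phi=0$ while $\mathbf v$ is horizontal. I would then show $L'(\phi)>0$ for $\phi\in(0,\pi/6)$ by tracking the two junction tangents directly; this gives at once that $\phi=0$ is a strict local minimum along the roll (part (1)) and that $L$ increases monotonically (part (2)). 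Part (1) is completed by the separation bound, which I would verify pointwise at $\phi=0$ by exhibiting, at each point of the curve, two unit circles tangent to it on opposite sides. For the endpoint I would push the roll to the admissible boundary $\phi=\pi/6$, where $C$ reaches distance $2$ from $B$ and $A,B,C$ become mutually tangent; there the connecting segments collapse at the new tangency and a direct count of the arc angles together with the surviving straight contribution gives $L=\tfrac{5\pi}{3}+2$, a configuration at which the inner touching circle can no longer be inscribed and the separation bound fails.

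For (3) I would enlarge the one-parameter roll to the two-parameter family in which the centre of $C$ ranges over the admissible region of the plane, and interpret the net vector $\mathbf F(C)=\sum_j\hat{\mathbf u}_j$ as a planar force whose zeros are exactly the ribbonlength-critical configurations. The central configuration is the minimum already located; invoking the reflection symmetry $x\mapsto-x$ (which swaps $B$ and $D$) I would find the remaining zeros of $\mathbf F$ as a mirror pair, one displaced toward $B$ and one toward $D$ and lying off the rolling circle. Computing the Jacobian of $\mathbf F$ (equivalently the Hessian of $L$) at one of them and exhibiting it as indefinite---with one negative and one positive eigenvalue---certifies a saddle, and the reflection carries this to the second, giving precisely two saddle points.

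The principal obstacle is the combinatorial bookkeeping of the $cs$ curve: as $C$ rolls, tangencies with $A$, $B$ and $C$ are created and destroyed, so $L(\phi)$ and $\mathbf F$ are only piecewise smooth, and I must check that $L'(\phi)$ stays positive and that $\mathbf F$ remains continuous across each transition---in particular across the value of $\phi$ at which the curve first feels the disk $B$. The second real difficulty is the sign of the Hessian at the two candidate saddles, which requires differentiating the junction tangents $\hat{\mathbf u}_j$ with respect to the position of $C$; here the reflection symmetry is what keeps the computation tractable and guarantees that the two critical points have the same, indefinite, type.
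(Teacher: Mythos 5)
Your treatment of parts (1) and (2) follows essentially the same route as the paper: criticality at the symmetric configuration because the two junction pulls sum to a vertical vector while the admissible rolling directions are horizontal (equivalently, your reflection argument), then monotone growth of length along the roll by tracking the rotation of the $Q$-side junction tangent, with the roll terminating at $\phi=\pi/6$ where $\|c-b\|=2$. Your endpoint bookkeeping is in fact cleaner than the paper's: the limit curve decomposes as an arc of $\pi/6$ on $A$, the surviving segment of length $2$, an arc of $7\pi/6$ on $C$, and an arc of $\pi/3$ on $B$, giving $2+\tfrac{5\pi}{3}$, in agreement with the statement; the paper's own proof sums $\pi/3+2+\pi+\pi/3+\pi/6=\tfrac{11\pi}{6}+2$, which contradicts its statement, so your value is the correct one. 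Your worry about the changing $cs$ combinatorics during the roll is legitimate and is glossed over in the paper as well.

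Part (3), however, contains a genuine gap, and here your route also diverges from the paper's. You propose to find the two saddles as interior zeros of the force field $\mathbf F(C)=\sum_j\hat{\mathbf u}_j$, ``lying off the rolling circle'', and to certify them by computing Hessians there. But $\mathbf F$ has no zeros in the admissible region for this problem: both junction pulls are unit vectors pointing from $C$ back toward the anchored part of the curve (the side containing $P$ and $Q$), so they always lie in a common open half-plane and cannot cancel. Indeed, even your ``minimum already located'' is not a zero of $\mathbf F$: at the central configuration $\mathbf F=(0,-2)$, and that configuration is critical only because this vector is normal to the active constraint $\|c-a\|=2$ and points into the forbidden disk. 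All critical configurations in this lemma are constrained (KKT or min--max) critical points rather than zeros of the force, so a search for unconstrained interior zeros followed by a Hessian computation cannot get off the ground. The paper handles (3) quite differently: it produces the two saddle points abstractly, as min--max critical points furnished by the mountain pass lemma applied to the two rolling directions (clockwise and counterclockwise) from the central minimum. If you want a constructive alternative to that argument, you must work with constrained criticality --- locate configurations where $\mathbf F$ lies in the normal cone of the active constraints and analyse the second variation of length restricted to admissible directions --- not with zeros of $\mathbf F$ in the open region.
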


\begin{proof} 
(1) Let the contribution to the negative gradient of length be denoted by ${\bf u} = {\bf u_1} + {\bf u_2}$. It is not possible to perturb $C$ in every direction ${\bf v}$ of the negative gradient since $A$ and $C$ cannot overlap. Note that the first allowed perturbation ${\bf v}$ to $C$, being length non-increasing, satisfies ${\bf u} \cdot {\bf v} = 0$. We conclude criticality by noting that the directional derivative being zero indicates that the $scs$ curve is locally a critical point of length in the direction of ${\bf v}$. The length of this $scs$ curve is $4 + \pi \approx 7.141$. Since the line segments are parallel and distant apart 2 units, this curve satisfies the separation bound.

\begin{figure} [h!]
\centering
\includegraphics[width=1\textwidth]{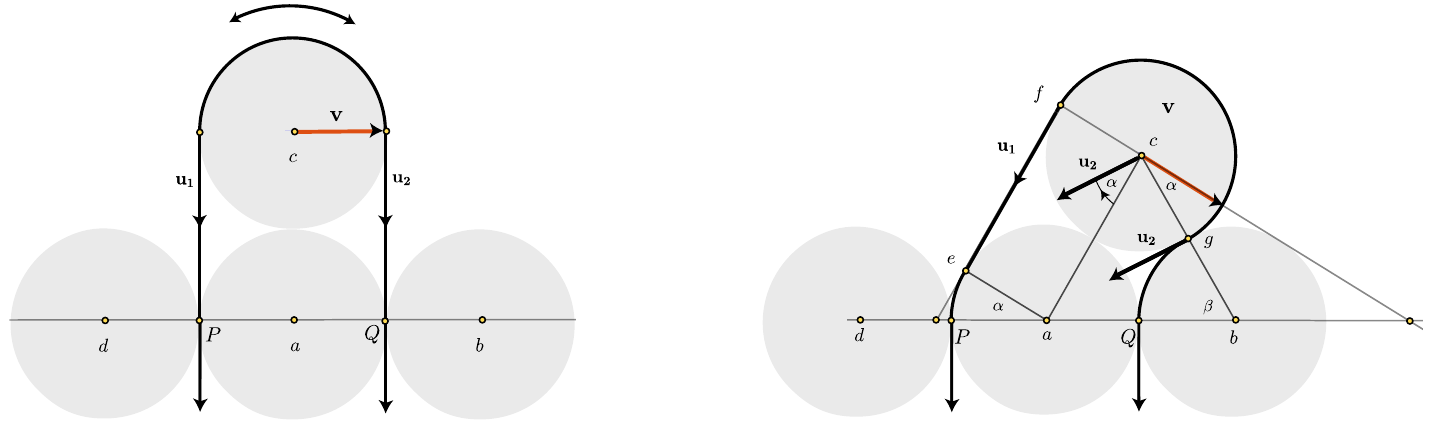}
\caption{We start with an $scs$ elastic-like curve and roll $C$ over $A$ clockwise to achieve the limit curve on the right. The left figure illustrates that the first variation vanishes since ${\bf u} \cdot {\bf v} = 0$.}
\label{fig:comp}
\end{figure}

(2) The vector ${\bf u_1}$ in the $csc$ curve is determined by a common tangent between $A$ and $C$, and ${\bf u_2}$ is determined by a common tangent between $C$ and $B$. Since $C$ rolls over $A$, say clockwise, it is not hard to see that the angle of the vector ${\bf u_2}$ decreases monotonically clockwise from $3\pi/2$ to $3\pi/2 - \pi/3 = 4\pi/3$, and therefore ${\bf u_2} \cdot {\bf v}$ is monotonically increasing during the rolling since ${\bf v}$ remains fixed, at the center of $C$, see Figure \ref{fig:comp}. Additionally, by construction, ${\bf u} \cdot {\bf v} = 0$. By bilinearity, we have
$${({\bf u_1} + {\bf u_2}) \cdot {\bf v} = {\bf u_1} \cdot {\bf v} + {\bf u_2} \cdot {\bf v} = {\bf u_2} \cdot {\bf v} \geq 0.}$$

Since the centres of all circles remain at least a distance of 2 apart, the rolling must stop when $\|c - b\| = 2$. In this limit case, $\alpha = \pi/3$ and $\beta = \pi/6$. Therefore, the limit $cs$ curve is a local maximum of length $\pi/3 + 2 + \pi + \pi/3 + \pi/6 = \frac{11\pi}{6} + 2 \approx 7.756$. Note that this curve does not satisfy the separation bound since the segment ${ef}$ and the arc ${Qg}$ are less than 2 units apart. Note this is not a variational critical point, as the extremum arises from the constraint of keeping some disks fixed.

(3) By a direct application of the mountain pass lemma, we conclude that there are two saddle points of length: one by rolling $C$ clockwise and the other by rolling $C$ counterclockwise from the initial critical position. 
\end{proof}

\begin{figure} [h!]
\centering
\includegraphics[width=.8\textwidth]{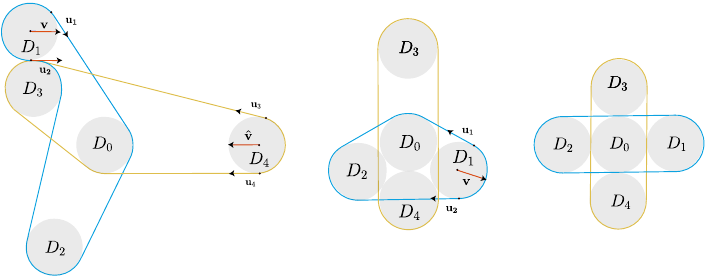}
\caption{Examples of Salomon knot disk diagrams. Left: there are two scenarios under the descent. The disk $D_4$ is free to approach $D_0$, while disk $D_3$ blocks $D_1$. Right: a minimal-length Salomon disk diagram. This minimiser satisfies the separation bound.}
\label{fig:gradsal}
\end{figure}

\begin{theorem} \label{sal1}
The minimal ribbonlength for the standard Salomon knot diagram is $8 + 2\pi$.
\end{theorem}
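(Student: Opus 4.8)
The plan is to produce an explicit $cs$ realisation of the standard Salomon diagram whose core has length $16+4\pi$, to certify it as a constrained length minimiser in disk space, and then to invoke Theorem~\ref{ribcs} to transfer minimality to ribbon space. Since $\Rib(\gamma)=\tfrac12\,\mathrm{Length}(\gamma)$, the asserted value $8+2\pi$ corresponds to a core of length $16+4\pi=4(4+\pi)$, and this factorisation is the organising idea: I would build the minimiser so that its core is a concatenation of four arcs, each an isometric copy of the critical $scs$ curve of Lemma~\ref{lemcomp} (a pair of parallel length-$2$ segments joined by a unit semicircular cap, of total length $4+\pi$).

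Concretely, I would place unit disks in the bounded complementary regions of the standard four-crossing diagram, as in Figure~\ref{fig:gradsal}, and run the geometric gradient descent of \cite{ribbon1}, allowing the free disks to move (the scenario in which $D_4$ approaches $D_0$) while recording the blocking relations (the scenario in which $D_3$ blocks $D_1$). Existence of a $cs$ minimiser is guaranteed by Theorems~\ref{minimalribbon} and~\ref{ribcs}. I expect the descent to terminate at a configuration in which the $cs$ core, read around each of the four caps, reproduces the local model of Lemma~\ref{lemcomp}: a unit disk capped by a semicircle with its two flanking segments vertical, parallel, and at distance $2$, so that the force balance $\mathbf{u}\cdot\mathbf{v}=0$ holds at every arc--segment junction. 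Summing the four congruent pieces then yields core length $4(4+\pi)=16+4\pi$, hence $\Rib=8+2\pi$.

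It remains to certify minimality and the ribbon property. By Lemma~\ref{lemcomp}(1) each cap is a critical point of length in every admissible direction and individually satisfies the separation bound, while Lemma~\ref{lemcomp}(2) shows that any admissible roll away from the critical position is length non-decreasing. Since the four caps interact only through the shared disk centres and the constraint that centres remain at distance at least $2$, a length-decreasing variation of the assembled diagram would have to shorten some cap, which the monotonicity forbids; this yields local minimality. I would then verify the separation bound globally, checking that each segment is parallel to and at distance $2$ from the segment it faces and that no segment--arc pair from distinct caps comes within distance $2$. The minimiser is therefore ribbon, and Theorem~\ref{ribcs} promotes it to a length minimiser in ribbon space, giving minimal ribbonlength $8+2\pi$.

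The main obstacle is the passage from per-cap minimality to genuine global minimality, together with the global separation bound. Lemma~\ref{lemcomp} governs a single rolling disk against fixed neighbours, whereas in the full diagram several disks move at once and the two blocking scenarios of Figure~\ref{fig:gradsal} must be shown compatible with one symmetric critical configuration; in particular one must rule out asymmetric critical configurations of strictly smaller length and confirm that no inter-cap distance ever drops below $2$ along the descent. I expect this coordination of the constraint graph --- ensuring the descent cannot escape to a shorter, possibly non-ribbon, configuration --- to be the crux, with the length bookkeeping and the single-cap criticality following routinely from Lemma~\ref{lemcomp} and Theorem~\ref{ribcs}.
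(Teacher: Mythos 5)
Your construction and the final count are right, and your skeleton (gradient descent to a candidate, check the separation bound, invoke Theorem~\ref{ribcs}) matches the paper's. But there is a genuine gap exactly where you flag ``the crux'': everything you certify is local, whereas the theorem claims a global minimum. Lemma~\ref{lemcomp} gives criticality of a single cap with \emph{fixed} endpoints and fixed end directions; in the assembled diagram the junctions between your four caps are free to move, so a general admissible variation displaces them and the lemma does not apply cap-by-cap. More importantly, even granting per-cap local minimality and your monotonicity argument, you obtain only that the symmetric configuration is a local minimiser; you never rule out a different (possibly asymmetric) disk-space configuration of strictly smaller length, which is precisely what ``minimal ribbonlength'' requires.

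The paper closes this gap with two steps your proposal is missing. First, it constrains \emph{any} minimiser: if some peripheral disk fails to touch $D_0$, one can strictly decrease length by perturbing it along the negative gradient (no overlap is possible, since $D_0$ and that disk lie in regions separated by the diagram), and if one disk is blocked by another, one can still perturb in a direction $\mathbf{v}$ making an acute angle with $\mathbf{u}_1+\mathbf{u}_2$, so $(\mathbf{u}_1+\mathbf{u}_2)\cdot\mathbf{v}>0$ and the blocked configuration is not minimising. Hence any minimiser (which exists by Theorems~\ref{minimalribbon} and~\ref{ribcs}) has all four peripheral disks touching $D_0$. Second, it invokes a global lower bound rather than a local one: the Salomon diagram consists of two loops, each of which must enclose three unit disks of the configuration, and the minimal perimeter enclosing three such disks is the stadium curve of length $8+2\pi$; since the exhibited configuration attains $2(8+2\pi)=16+4\pi$ exactly, it is a global minimum in disk space, and being ribbon it minimises in ribbon space by Theorem~\ref{ribcs}. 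Your four-cap decomposition $4(4+\pi)$ is a pleasant way to organise the bookkeeping, but it cannot substitute for a lower bound valid against all competitors; to repair your argument you would need to add the step that bounds from below the length of each of the two loops of \emph{any} admissible configuration, e.g.\ by the stadium-curve bound the paper uses.
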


\begin{proof}  
Because length minimisers in disk space are $cs$ curves (Theorem \ref{ribcs}), we can consider a $cs$ representative for the diagram of the Salomon knot (see Figure \ref{fig:gradsal}). Without loss of generality, fix the position of $D_0$. We first prove that none of the disks $D_1, D_2, D_3, D_4$ prevent each other from intersecting $D_0$. Suppose that $D_3$ is preventing $D_1$ from touching $D_0$. Note that it is not always possible to perturb disk $D_1$ in every direction of the negative gradient, because otherwise it would intersect the interior of disk $D_3$. However, we can perturb disk $D_1$ in the direction of a vector $\bf{v}$, which makes an acute angle with the negative gradient, in this case $\bf{u}_1 + \bf{u}_2$, in order to decrease the length of the curve. Therefore, since $(\bf{u}_1 + \bf{u}_2) \cdot \bf{v} > 0$, this configuration is not length-minimising.
 
We assert that each of the disks $D_1, D_2, D_3, D_4$ intersects $D_0$ at a single point. Without loss of generality, suppose that $D_4$ does not touch $D_0$. Let the contribution to the negative gradient of length from the centre of $D_4$ be denoted by $\bf{u}_3 + \bf{u}_4$ (see Figure \ref{fig:gradsal}). If $D_0$ and $D_4$ do not intersect, we may decrease the length of the disk diagram by perturbing disk $D_4$ in the direction of the negative gradient $\hat{\bf v}$. Note that $D_0$ and $D_4$ belong to different regions separated by the disk diagram; therefore, the distance between their centres is at least 2, i.e., no overlapping is possible. We conclude that $D_0$ and each of the disks $D_1, D_2, D_3, D_4$ intersect at a single point (see Figure \ref{fig:sal1}, right). The final configuration attains all lower bounds, since the minimal perimeter enclosing 3 disks is given by a stadium curve of length $8 + 2\pi$. We obtain a minimum-length element in disk space $\mathcal{D}$ of length $16 + 4\pi$ that satisfies the ribbon conditions. We conclude that the minimal ribbonlength of the standard Salomon diagram is $8 + 2\pi$.
\end{proof}

\begin{figure} [h!]
\centering
\includegraphics[width=1\textwidth]{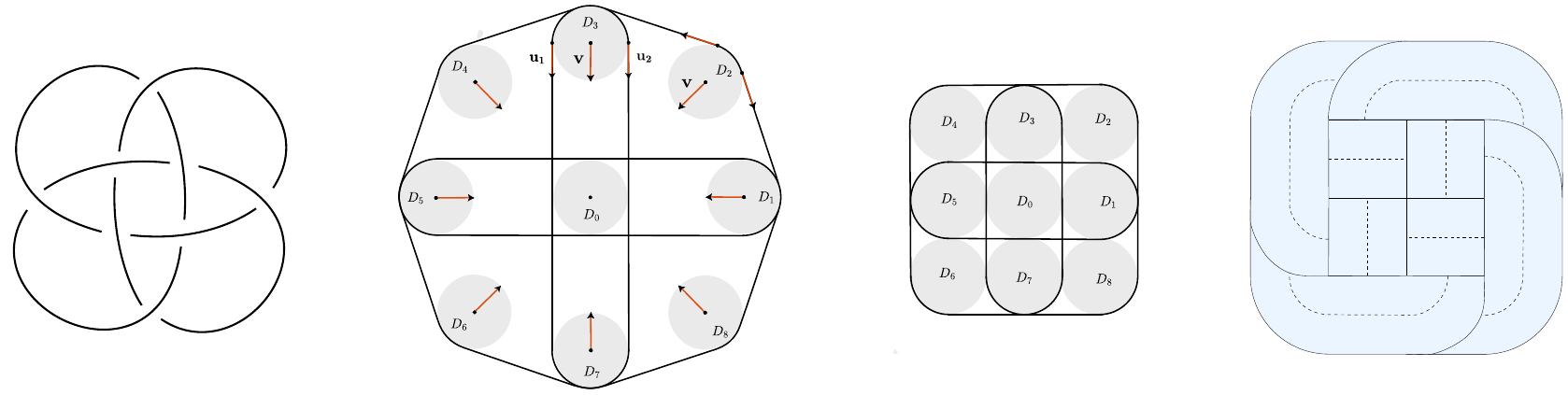}
\caption{}
\label{fig:turk}
\end{figure}

\begin{theorem} \label{turk}
The minimal ribbonlength for the standard Turk's head diagram is $16 + 3\pi$.
\end{theorem}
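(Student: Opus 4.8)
The plan is to follow the template established for the Salomon diagram in Theorem \ref{sal1}, combined with the local criticality test of Lemma \ref{lemcomp}. By Theorem \ref{ribcs}, a length minimiser in the relevant component of disk space $\mathcal{D}$ exists and is a $cs$ curve, and if it satisfies the separation bound it is automatically a ribbonlength minimiser. Since $\mathrm{Rib}(\gamma) = \mathrm{Length}(\gamma)/2$, it suffices to show that the minimal core length in disk space for the standard Turk's head diagram equals $32 + 6\pi$. First I would place a unit disk in each bounded complementary region of the diagram of Figure \ref{fig:turk}, fix one disk to remove the translational degree of freedom, and exploit the threefold symmetry of the standard diagram to reduce the number of independent families of regions that must be analysed.

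Next I would run the tightening argument. As in the Salomon proof, the first step is to show that no disk blocks a neighbour: if some disk $D_i$ were prevented from reaching tangency by an adjacent disk, then even though we cannot perturb $D_i$ along the full negative gradient ${\bf u}_1 + {\bf u}_2$, we can still perturb it along an admissible direction ${\bf v}$ making an acute angle with ${\bf u}_1 + {\bf u}_2$, so that $({\bf u}_1 + {\bf u}_2)\cdot {\bf v} > 0$ and the length strictly decreases, contradicting minimality. The second step is to show that each disk is tangent to the disks in its neighbouring regions, again by perturbing any untouched disk along the negative gradient; because disks in distinct regions are separated by the core, their centres can never come closer than distance $2$, so tangency (a single intersection point) is exactly the extremal configuration. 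Iterating over the symmetry classes of regions pins the configuration down to a unique tight diagram.

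Finally I would compute the length of this tight configuration by decomposing the enclosing $cs$ curve into unit-radius arcs and straight segments, as in the stadium computation used for Salomon, where the minimal perimeter enclosing three collinear unit disks is $8 + 2\pi$. Summing the total arc length and total straight length, which the bookkeeping forces to be $6\pi$ and $32$ respectively, gives core length $32 + 6\pi$ in disk space, hence ribbonlength $16 + 3\pi$. It then remains to verify that this tight $cs$ curve satisfies the separation bound, namely that parallel strands sit at distance exactly $2$ and that no arc approaches another part of the core too closely, so that Theorem \ref{ribcs} upgrades the disk-space minimiser to a genuine ribbon minimiser.

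The main obstacle I anticipate is the combinatorial bookkeeping of the tightening step: unlike the Salomon case, the Turk's head has several inequivalent families of regions and crossings, so one must carry out the no-blocking and tangency arguments for each family and, crucially, confirm that tightening all regions simultaneously is consistent rather than over-constrained. This is where the threefold symmetry does the real work, forcing a single compatible equilibrium. A secondary difficulty is the global verification of the separation bound: in a woven diagram it is not a priori clear that tightening does not force two strands to violate the bound, exactly as happened at the limit curve in part (2) of Lemma \ref{lemcomp}, so this must be checked explicitly on the final configuration.
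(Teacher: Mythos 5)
Your overall template (tighten via gradient descent to a tight $cs$ configuration, then check the separation bound and invoke Theorem \ref{ribcs}) matches the paper's strategy, but your write-up has a genuine gap rather than just omitted bookkeeping. You assert that the final count of arc length and segment length is ``forced'' to be $6\pi$ and $32$, yet you never exhibit the structure that makes this computable and, more importantly, that makes the tight configuration a \emph{global} minimum rather than merely the endpoint of one descent path. The paper's key step, absent from your proposal, is to decompose the tight Turk's head diagram into two loops: $\gamma_1$, which is exactly a Salomon disk diagram and hence has minimal length $16+4\pi$ by Theorem \ref{sal1}, and $\gamma_2$, the outer perimeter of the nine unit disks centred on the $3\times 3$ square lattice, which is minimal of length $16+2\pi$. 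Because one and the same configuration attains both of these lower bounds simultaneously, it is a global minimiser, and the total $32+6\pi$ (ribbonlength $16+3\pi$) falls out of the decomposition rather than being asserted. Without an attained lower bound of this kind, your no-blocking and tangency arguments only certify criticality, which is a local statement --- precisely the distinction the paper itself draws in Theorem \ref{door}, where criticality but not minimality can be claimed.

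A secondary but telling error: the standard diagram used in the paper has fourfold, not threefold, symmetry. It is drawn on the square lattice with nine unit disks --- a central disk $D_0$, four edge disks starting at $(\pm 4,0)$, $(0,\pm 4)$ and pulled in to $(\pm 2,0)$, $(0,\pm 2)$, and four corner disks starting at $(\pm 3,\pm 3)$ and pulled in to $(\pm 2,\pm 2)$ --- with the descent realised by explicit linear perturbations toward the centre. Organising the argument around a threefold symmetry does not match the region structure of this diagram, and it is exactly the square-lattice geometry that makes one of the two component loops literally a Salomon diagram, which is what allows Theorem \ref{sal1} to be reused. So while your instinct to recycle the Salomon analysis is correct, the recycling happens at the level of a whole component loop of the decomposition, not at the level of the local tangency arguments alone.
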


\begin{proof} 
Since length minimisers in disk space are $cs$ curves (see Theorem \ref{ribcs}), we consider a suitable $cs$ representative for the Turk's head diagram to facilitate the application of the gradient descent. Let $D_i$ be unit-radius disks with centres $d_i$, $i = 0, \ldots, 8$, with one disk in each bounded region separated by the diagram. Let $D_0$ be fixed with $d_0 = (0, 0)$, and let $d_1 = (4, 0)$, $d_2 = (3, 3)$, $d_3 = (0, 4)$, $d_4 = (-3, 3)$, $d_5 = (-4, 0)$, $d_6 = (-3, -3)$, $d_7 = (0, -4)$, and $d_8 = (3, -3)$. The second illustration in Figure \ref{fig:turk} shows the chosen initial $cs$ diagram. Note that a similar analysis as in Theorem \ref{sal1} can be applied to show that none of the peripheral disks $D_1, D_3, D_5, D_7$ can be prevented from touching $D_0$.

First, consider linear perturbations $p(t) = d_0 t + d_i (1 - t)$, $t \in [0, 1/3]$, to the centres of $D_i$ for $i = 2, 4, 6, 8$ while leaving the other parts of the diagram fixed. For example, when considering $p(t) = d_0 t + d_2 (1 - t)$, the gradient descent checks length non-increasing for the perturbed part of the diagram for $t \in [0, 1/3]$ since ${\bf v} \cdot ({\bf u_1 + u_2}) \geq 0$. Additionally, $p(1/3) = (2, 2)$, which also guarantees the separation bound. The cases when $d_4, d_6, d_8$ are identical.

Next, consider linear perturbations $p(t) = d_0 t + d_i (1 - t)$, $t \in [0, 1/2]$, to the centres of $D_i$ for $i = 1, 3, 5, 7$. When $p(t) = d_0 t + d_1 (1 - t)$, the gradient descent checks length non-increasing for the perturbed part of the diagram for $t \in [0, 1/2]$ since ${\bf v} \cdot ({\bf u_1 + u_2}) \geq 0$. Additionally, $p(1/2) = (2, 0)$, which also guarantees the separation bound. The cases when $d_3, d_5, d_7$ are identical. The obtained realisation is a disk diagram satisfying the separation bound, and therefore is of ribbon type (see Figure \ref{fig:turk}). 

Note we can decompose the disk diagram in Figure \ref{fig:turk} into two loops: one being a Salomon disk diagram, and $\gamma_2$, being the perimeter of 9 disks with centres in the square lattice. In Theorem \ref{sal1}, we proved the minimality of $\gamma_1$ whose length is $16 + 4\pi$. By a variational argument similar to those given above, $\gamma_2$ is also minimal with length $16 + 2\pi$. Since the configuration attains both bounds for minimality, it gives a global minimum for the standard Turk's head knot diagram, whose ribbonlength is the sum of both loop lengths divided by 2, yielding a total of $16 + 3\pi$.
\end{proof} 

\begin{corollary} \label{consal}
The minimal ribbonlength of the connected sum of $n$ Salomon links is $8n + 2\pi$.
\end{corollary}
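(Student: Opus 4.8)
The plan is to prove matching upper and lower bounds of $8n + 2\pi$, organising everything around a single bookkeeping principle and an induction on $n$ whose base case $n=1$ is Theorem \ref{sal1}. The guiding observation is that, in disk space, the minimal Salomon core of length $16 + 4\pi$ splits into a \emph{straight} contribution of $16$ and a \emph{curved} contribution of $4\pi$. Under connected sum I would track these two contributions separately: the straight budget is additive in the number of summands, while the curved budget is fixed by the topology of the diagram. The target is therefore a disk-space minimiser of core length $16n + 4\pi$, so that dividing by the width gives ribbonlength $(16n+4\pi)/2 = 8n + 2\pi$.

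For the upper bound I would give an explicit construction realising this length. Arrange the central disks of the $n$ Salomon summands along a horizontal line and build the standard connected-sum diagram as two long, parallel stadium-shaped loops that cross one another $4n$ times (four crossings per summand), each loop enclosing the row of central disks. Each such loop then consists of two straight runs of total length $8n$ together with two unit-radius semicircular caps, for loop length $8n + 2\pi$ and total core length $16n + 4\pi$. As in the proofs of Theorems \ref{sal1} and \ref{turk}, I would run the gradient descent from this initial $cs$ configuration, forcing each peripheral disk to meet its central disk at a single point, and then check that the resulting $cs$ curve satisfies the separation bound: the parallel straight runs stay a distance $2$ apart and the caps are genuine unit semicircles. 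This certifies the configuration is of ribbon type and achieves the target length.

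For the lower bound I would bound the two contributions independently. The curved part is controlled topologically: the connected sum is a two-component link, so its core is a pair of closed $C^1$ loops, each of curvature at most $1$; hence each loop has curved length at least its total turning, which is at least $2\pi$, and summing over the two components gives a curved budget of at least $4\pi$. For the straight part I would cut the diagram along the connected-sum spheres into $n$ Salomon blocks, invoke Proposition \ref{disk} so that every bounded region still carries a unit disk, and apply the minimal-enclosure (stadium) estimate of Theorem \ref{sal1} within each block; since distinct blocks meet only along the cut arcs, the straight budgets they force are disjoint and add up to at least $16n$. Combining the two estimates yields ribbonlength at least $\tfrac12(16n + 4\pi) = 8n + 2\pi$, matching the construction.

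The main obstacle is making the straight lower bound rigorous across the connected-sum junctions: I must rule out any globally optimised configuration that shares or borrows straight length between adjacent blocks so as to undercut the $16n$ total, and I must confirm that the block decomposition induced by the connect-sum spheres is compatible with the disk packing guaranteed by Proposition \ref{disk}. Turning the phrase \emph{a variational argument similar to those given above} into a precise statement here amounts to a locality principle: the first-variation balance at each central disk forces the same single-point tangency found in Theorem \ref{sal1}, independently within each block. That locality step is where the care is required; the curved-part bound and the explicit upper-bound construction are then routine.
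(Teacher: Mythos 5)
Your construction and final count match the paper's: the diagram you describe (two long parallel stadium-type loops of length $8n+2\pi$ each, total core length $16n+4\pi$) is exactly what the paper produces, though the paper gets there by a different route --- an induction that takes $n$ copies of the minimal Salomon diagram from Theorem \ref{sal1}, cuts each along a crease, and glues, with each junction removing four semicircular caps and hence subtracting $2\pi$ of ribbonlength, giving $(8+2\pi)n - 2\pi(n-1) = 8n+2\pi$. Your upper bound is therefore sound, and so is your curved-part lower bound of $4\pi$ (total turning at least $2\pi$ per component, curvature at most $1$), granted the convention --- which you correctly adopt but should state explicitly, since connected sum of multi-component links is ambiguous --- that this connected sum joins both pairs of components, so the result remains a two-component link.

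The genuine gap is the one you flag yourself: the straight-budget bound of $16n$. This is not a matter of routine care; it is the entire difficulty, and as stated the block argument fails. The minimal perimeter enclosing $k$ disjoint unit disks is not the collinear stadium value $4(k-1)+2\pi$: tighter clusters are strictly shorter (three unit disks in a triangular packing are enclosed by length $6+2\pi$, not $8+2\pi$), and the paper itself emphasises, citing Sch\"urmann just before Theorem \ref{door}, that loss of control over disk positions is precisely why that theorem claims only criticality rather than minimality. So ``apply the minimal-enclosure (stadium) estimate within each block'' presupposes what must be proved: that the crossing pattern of the connected-sum diagram forces the disks enclosed by each component into a spread-out, row-like configuration whose enclosure really costs $8n+2\pi$ per loop. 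Without that combinatorial step the lower bound does not close. It is fair to note that the paper's own proof does not close it either --- it silently assumes minimality survives the crease surgery --- so your program, if the locality/spreading step were carried out, would be stronger than the argument in the paper; but as written, the proposal is incomplete at its decisive step rather than complete by a different method.
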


\begin{proof} 
In Theorem \ref{sal1}, we established that the minimal ribbonlength for the Salomon knot is $8 + 2\pi$. We can perform a connected sum of two Salomon knots by cutting each along one of its four creases. This operation would subtract $\pi$ from each of the length components, resulting in a total subtraction of $2\pi$ from the ribbonlength. For the connected sum of three Salomon knots, we must subtract a total of $2 \times 2\pi$ from the total ribbonlength. A simple inductive process leads to the conclusion that the ribbonlength of the connected sum of $n$ of these diagrams is $(8 + 2\pi)n - 2\pi(n - 1) = 8n + 2\pi$, as desired.
\end{proof}

The following result is a direct application of the ribbonlength of the trefoil established in \cite{ribbon1}. This approach allows us to avoid using gradient descent. Despite differing in chirality, the knots in question have the same minimal ribbonlength.

\begin{corollary}
The granny knot and the square knot have identical minimal ribbonlengths for their respective standard diagrams, each measuring $12 + 3\pi$.
\end{corollary}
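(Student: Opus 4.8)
The plan is to exploit the fact that both knots in the statement are connected sums of two trefoils, differing only in the relative chirality of their factors: the granny knot is $3_1 \# 3_1$ (equal chirality) and the square knot is $3_1 \# \overline{3_1}$ (opposite chirality). Since the minimal ribbonlength of the standard trefoil diagram was computed in \cite{ribbon1}, namely $\Rib(3_1) = 6 + 2\pi$, I would bypass gradient descent entirely and instead build the minimiser for each composite knot by splicing two minimal trefoil diagrams, exactly as in the proof of Corollary \ref{consal}.

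First I would set up the connected-sum-along-a-crease operation. Take a minimal ribbon representative of each trefoil factor and select one crease (a unit semicircular fold arc, of length $\pi$) on each. Cutting both creases opens each diagram into a pair of strand ends a distance $2$ apart, and splicing the two factors so that the strands pass straight through merges the two semicircular arcs, removing total length $2\pi$ while adding none if the factors are positioned collinearly at the join. Because the trefoil is a single-component knot, only one crease per factor is consumed, so the correction is $\pi$ in ribbonlength (half of the correction for the two-component Salomon link in Corollary \ref{consal}). The construction therefore yields a diagram of ribbonlength $2(6+2\pi) - \pi = 12 + 3\pi$. I would then check, by the same local argument used in Theorem \ref{sal1}, that the spliced diagram still satisfies the separation bound and is of ribbon type, so that by Theorem \ref{ribcs} its length is realised in ribbon space.

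The chirality independence, which is the headline of the statement, is the easy part, and I would address it last. The cut-and-splice operation is purely local to the chosen crease and to the two strand ends being joined; it never references the global handedness of either factor. Consequently the identical construction, the identical length bookkeeping, and the identical separation-bound check apply verbatim whether the two trefoils have equal chirality (granny) or opposite chirality (square), forcing both to share the value $12 + 3\pi$.

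The hard part will be the matching lower bound, namely that no ribbon representative of the standard granny or square diagram can do better than $12+3\pi$. Here I would cut an arbitrary representative along the connect-sum crease into two sub-diagrams, each combinatorially a trefoil, bound each below by $\Rib(3_1)=6+2\pi$ via \cite{ribbon1}, and then add back the single shared crease that the two halves contribute jointly rather than separately, obtaining $\Rib \ge 2(6+2\pi) - \pi = 12+3\pi$. Making this decomposition rigorous, in particular ruling out representatives that redistribute length across the connect-sum site so as to defeat the per-factor estimate, is the one genuinely non-routine step, and it is what upgrades the explicit construction from an upper bound to the claimed equality.
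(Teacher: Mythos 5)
Your proposal takes essentially the same route as the paper's proof: decompose each knot as a connected sum of two trefoils, invoke $\Rib(\text{trefoil}) = 6 + 2\pi$ from \cite{ribbon1}, subtract $\pi$ of ribbonlength for the overlapped crease/unit-circle at the connect-sum site, and observe that chirality never enters the construction, so the granny and square values coincide. The matching lower bound that you flag as the genuinely non-routine step is not carried out in the paper either; its proof stops at the same additive bookkeeping you give for the upper bound, adding only a remark that corner perturbations as in Lemma \ref{lemcomp} yield a four-parameter family of minimisers of constant length.
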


\begin{proof}
Both the granny and the square knot are connected sums of two trefoil knots. Apart from their crossings, these are diagrammatically identical. The former is a connected sum of two right-handed trefoil knots, while the latter is a connected sum of a right-handed and a left-handed trefoil knot. Since $Rib(\text{trefoil}) = 6 + 2\pi$ (see \cite{ribbon1}), each trefoil contributes $6 + 2\pi$ to the total ribbonlength. However, in the connected sum operation, one unit disk perimeter overlaps between the two trefoils. We conclude that $\text{Rib}(\text{granny}) = \text{Rib}(\text{square}) = (6 + 2\pi) + (6 + 2\pi) - \pi = 12 + 3\pi$.

Note that a perturbation as in Lemma \ref{lemcomp} to each of the four corners of the diagrams does not lead to a violation of the separation bound. In fact, there is a four-parameter family of minimal ribbon knot diagrams for each of these diagrams. It is easy to see that these perturbations leave the overall length of the knot constant.
\end{proof}

\begin{figure} [h!]
\centering
\includegraphics[width=.6\textwidth]{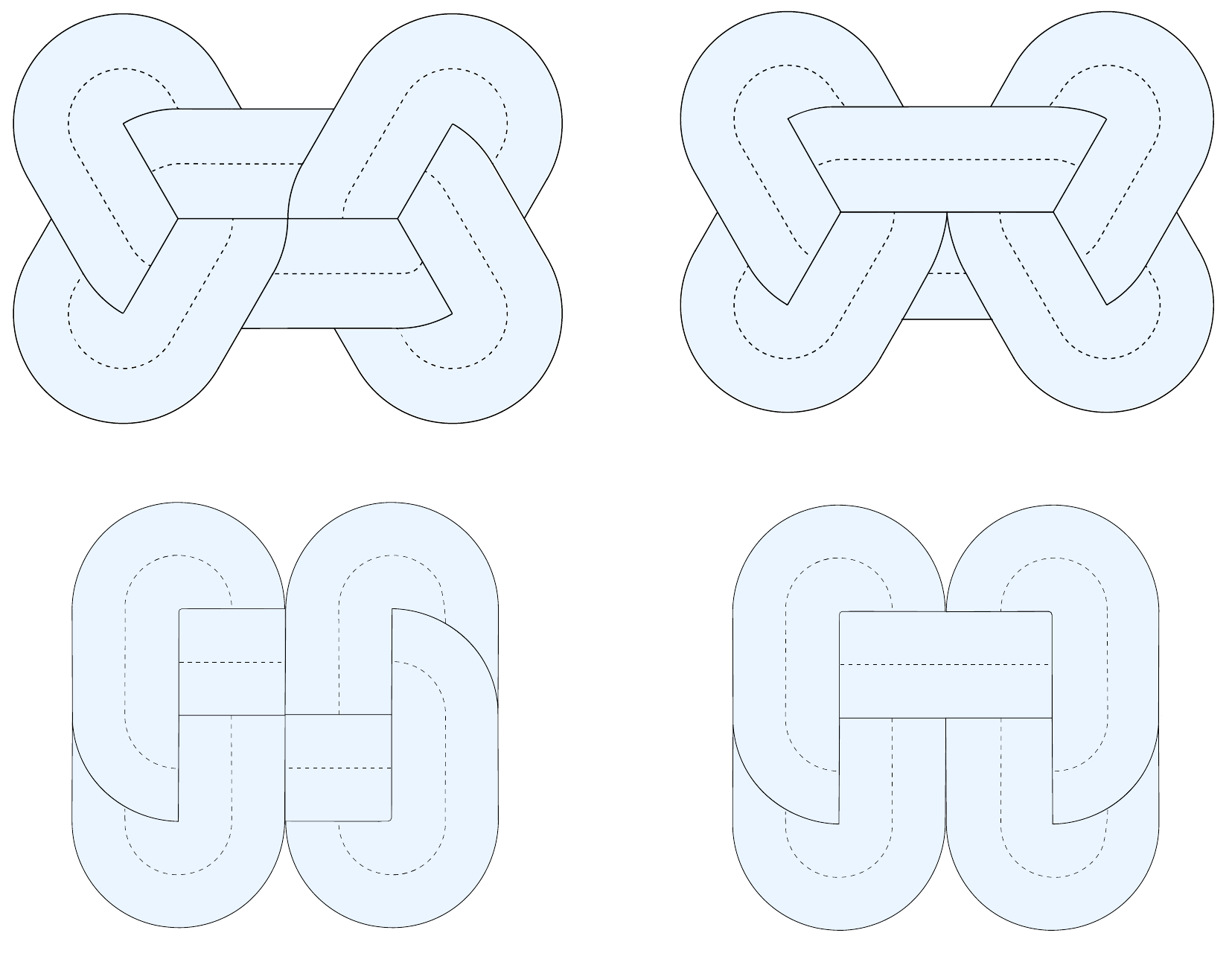}
\caption{Left: Minimal ribbon knot diagrams for the granny knot. Right: Minimal ribbon knot diagrams for the square knot. Note there is a four-parameter family of these objects. We can perturb the top diagrams to obtain the bottom ones and vice versa without changing their length. The second row shows tight configurations for the granny and square knots, respectively.}
\label{fig:}
\end{figure}

\section{Infinite Families of Ribbon Knots with Critical Ribbonlength} \label{infinite}

We propose a method for checking local criticality for certain classes of knots. This involves starting with a suitable {\it parallel} initial disk diagram so that a gradient descent ends in a configuration readily confirmed to be of {\it critical} ribbonlength. Therefore, any perturbation of its core can be verified to increase length. We consider the unit square lattice, along with Lemma \ref{lemcomp}, offering a simple setup for checking the presence of critical (not necessarily minimal) ribbon knot diagrams.

{\bf The Rectangular Family}. We examine $m \times n$ arrangements of disk diagrams by first proving their minimality in disk space and then confirming whether they satisfy the separation bound. By Theorem \ref{ribcs}, we can restrict our consideration to $cs$ curves. To construct these arrangements, we use $m$ sufficiently long parallel horizontal stadium curves. Next, we attach $n$ parallel, sufficiently spaced vertical stadium curves in an alternating crossing pattern, as shown in Figure \ref{fig:square}. Each resulting region accommodates a unit-radius disk with its center positioned within the unit square lattice.

\begin{theorem} \label{rect} A $m\times n$ arrangement in the rectangular family as minimal length representative with ribbonlength $\pi(m+n)+8mn$.
\end{theorem}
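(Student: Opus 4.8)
The plan is to mirror the strategy of Theorems \ref{sal1} and \ref{turk}, treating the $m\times n$ arrangement as a link of $m+n$ disjoint closed loops and showing that gradient descent drives each loop to a tight stadium, after which the lengths simply add. First, by Theorem \ref{ribcs} I may restrict attention to $cs$ representatives, since a length minimiser in disk space is a $cs$ curve and, once it is verified to be ribbon, it is automatically a minimiser in ribbon space. I would then fix the combinatorial type so that the $m$ horizontal loops and the $n$ vertical loops each enclose a maximal collinear chain of unit disks: in the tight configuration each horizontal loop encloses $2n+1$ disks and each vertical loop encloses $2m+1$ disks, with all centres on the square lattice at mutual spacing $2$.

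Next I would establish minimality loop by loop. Starting from the parallel initial diagram, I apply the gradient descent exactly as in Theorem \ref{turk}: using the force vectors $\mathbf{u}_i$ at the arc--segment junctions, I check that each peripheral disk can be moved toward its row (or column) along a direction $\mathbf{v}$ with $(\mathbf{u}_1+\mathbf{u}_2)\cdot\mathbf{v}>0$, so that length strictly decreases until adjacent disks become mutually tangent and no disk can block another from reaching the tight collinear position. Once a loop's disks are collinear and successively tangent, the minimal-perimeter $cs$ curve enclosing that chain of $k$ disks is the stadium of length $2\pi+4(k-1)$; this is the $k=3$ fact already invoked in Theorem \ref{sal1}, applied here with $k=2n+1$ and $k=2m+1$. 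Since the total core length is the sum of the individual loop lengths, the descent argument shows it is at least $m(2\pi+8n)+n(2\pi+8m)=2\pi(m+n)+16mn$, hence the ribbonlength is at least $\pi(m+n)+8mn$.

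Then I would exhibit the explicit configuration attaining this bound, namely the $m+n$ interwoven stadiums on the lattice just described, and verify that it satisfies the separation bound so that it is a genuine ribbon diagram and, by Theorem \ref{ribcs}, a ribbon-space minimiser. At each crossing a horizontal and a vertical stadium meet, and the local picture is exactly that of Lemma \ref{lemcomp}: the strands consist of parallel segments two units apart together with unit arcs, so the separation bound holds provided the alternating crossing pattern and the lattice spacing keep opposing segments and arcs at distance $2$.

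The main obstacle is the simultaneous achievability: I must ensure that all $m+n$ stadiums can be made tight at once while every crossing still respects the separation bound. Making one loop tight also tightens a disk shared with a transverse loop, so the descent steps for the different loops are coupled, and I must confirm that driving every row and column to tangency on the common lattice produces no strand pair closer than the width. This is precisely where Lemma \ref{lemcomp} does the work, guaranteeing that the rolling adjustment at each interlock preserves the bound; the remaining bookkeeping (counting the $2n+1$ and $2m+1$ enclosed disks per loop and summing stadium lengths) is routine once this compatibility is in hand.
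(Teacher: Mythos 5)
Your proposal is correct and follows essentially the same route as the paper's proof: restriction to $cs$ curves via Theorem \ref{ribcs}, gradient descent with linear perturbations as in Theorem \ref{turk} to reach the lattice-tight configuration, minimality via each component being a minimal-perimeter stadium curve, and a final check of the separation bound. Your version is in fact more explicit than the paper's, since you spell out the disk count ($2n+1$ per horizontal loop, $2m+1$ per vertical loop) and the stadium length formula $2\pi+4(k-1)$, whereas the paper records the same count as ``each transverse component adds 8 units,'' yielding the identical total $2\pi(m+n)+16mn$ and ribbonlength $\pi(m+n)+8mn$.
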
 

\begin{proof} 
Consider a $cs$ diagram of an $m \times n$ rectangular arrangement. We apply the gradient descent method, consisting of vertical and horizontal linear perturbations (as in Theorem \ref{turk}), to maximise disk boundary contact while keeping the centres aligned with the unit square lattice. This process continues until the method converges to a local minimum lattice configuration.

The final configuration achieves all lower bounds, as each of the components are minimal perimeter stadium curves enclosing a finite number of disks. We conclude that the final $m \times n$ arrangement is length minimal. In Figure \ref{fig:square} (centre), we show a final-state disk diagram for a $2 \times 2$ arrangement.

Note that each horizontal component adds 8 units to the length of a vertical component. Conversely, each vertical component adds 8 units to the length of a horizontal component. Since we have an $m \times n$ rectangular arrangement, its core length is $2\pi(m+n) + 16mn$ and satisfies the separation bound. Therefore, its ribbonlength is $\pi(m+n) + 8mn$.
\end{proof}

\begin{figure} [h!]
\centering
\includegraphics[width=.9\textwidth]{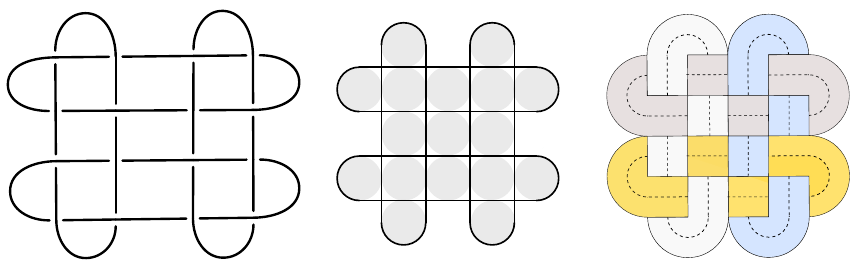}
\caption{Left: A rectangular disk diagram in a $2 \times 2$ arrangement, also known as the Salomon quadruple. Centre: A tightened disk diagram for the $2 \times 2$ arrangement. Right: A minimal ribbonlength $2 \times 2$ arrangement.}
\label{fig:square}
\end{figure}


\begin{observation} \label{altern} 
If we reverse the alternating pattern in the diagram in Figure \ref{fig:square}, the ribbonlength does not change. Moreover, in many cases, we could even make this diagram non-alternating and still have the same ribbonlength, however not minimal. This means that this realisation is critical for many different links with different global minima. Therefore, with this argument, we can establish rough upper bounds for several classes of ribbon knots.
\end{observation}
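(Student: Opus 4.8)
The plan is to exploit the fact that the ribbonlength functional $\Rib(\gamma)=\text{Length}(\gamma)/2$ is a function of the planar core $\gamma$ alone: the over/under decorations at the double points are purely topological bookkeeping and never enter the length computation. First I would isolate the core (the shadow) of the rectangular arrangement as an immersed loop $\gamma\subset\mathbb{R}^2$ whose only singularities are the double points at the lattice crossings. Reversing the alternating pattern toggles which strand is designated ``over'' at each crossing but leaves $\gamma$ itself literally unchanged; hence $\text{Length}(\gamma)$, and therefore $\Rib(\gamma)$, is invariant. This disposes of the first assertion.

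For the second assertion I would argue that, for the fixed shadow $\gamma$, one may assign over/under information \emph{independently} at each crossing. Every such assignment produces a legitimate knot or link diagram whose shadow is exactly $\gamma$, and all of these diagrams share the same core and hence the same ribbonlength $\pi(m+n)+8mn$ from Theorem~\ref{rect}. Among the $2^{c}$ assignments (with $c$ the number of crossings) many are non-alternating, yet each has identical length; since the alternating representative is already \emph{minimal} by Theorem~\ref{rect}, a non-alternating link realised on the same core will in general possess a strictly smaller global minimum, so the realisation is critical but not minimal for it.

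The heart of the argument is to promote this numerical coincidence to a statement about \emph{criticality}. Here I would invoke the variational criticality condition introduced above: the requirement $D\ell(\mathbf{v})=\sum_j \hat{\mathbf{u}}_j\cdot\mathbf{v}=0$ is phrased entirely in terms of data read off from the planar core --- the disk centres $\mathbf{x}$, the arc--segment junction tangents $\hat{\mathbf{u}}_j$, the admissible perturbation cone, and the separation bound. None of these quantities reference which strand is over at a crossing. Consequently, if the rectangular core is critical (indeed length-minimal, by Theorem~\ref{rect}) for one crossing assignment, it is critical for \emph{every} assignment yielding an admissible diagram. This is precisely the claim that a single realisation is simultaneously critical for many distinct link types with distinct global minima.

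Finally, the upper-bound consequence follows formally: the minimal ribbonlength of a link type $K$ is the infimum of $\Rib$ over all ribbon diagrams of $K$, so any admissible realisation of $K$ --- in particular our fixed critical core carrying a crossing assignment producing $K$ --- furnishes the upper bound $\Rib(K)\le \pi(m+n)+8mn$. When the core is not the global minimiser for $K$, this is a genuine non-tight bound, exactly the rough upper bound the observation advertises. I expect the main obstacle to be the third paragraph: one must verify carefully that the admissible perturbation cone and the separation-bound constraints --- which \emph{define} the variational problem --- depend only on the planar embedding of $\gamma$ and its double-point set, and not on the over/under labelling. In the flat-ribbon model this is morally immediate, since the ribbon is merely a planar annular neighbourhood of $\gamma$ and the crossing data is auxiliary, but making this independence explicit, together with checking that the double-point condition and weak separation bound defining disk space $\mathcal{D}$ transfer across all $2^{c}$ assignments, is the step that genuinely requires care.
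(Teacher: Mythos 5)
Your proposal is correct and takes essentially the same approach the paper intends: the observation is stated without proof precisely because the length functional and the variational criticality condition (disk centres, junction tangents, admissible perturbations, separation bound) are read off from the planar core alone, so reassigning over/under data at the crossings changes the link type but not the length or the criticality of the realisation --- exactly the argument you spell out. The only difference is that you make explicit, and flag as the delicate step, the independence of the disk-space constraints from the crossing assignment, which the paper treats as immediate from its definition of a ribbon knot diagram as a ribbon loop decorated with under/over information.
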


{\bf{The Salomon Family.}} The first element in this family is the Salomon knot (see Figure \ref{fig:sal1} above). The second element consists of a Salomon knot intertwined with an alternating unknotted loop (see Figure \ref{fig:sal1} below). The third element in this family is shown in Figure \ref{fig:door}, and so on.

So far we have established minimality since each of its constituent pieces is of minimal length, such as stadium curves. However, the general problem of determining the shortest curve that encloses $n$ unit disks in the plane is subtle and was investigated by Sch{\"u}rmann \cite{schurmann1}. In general, the minimum perimeter is not achieved by a subset of the hexagonal circle packing for $n > 370$. Whether this holds for $n < 54$ remains an open problem, which is why we cannot claim minimality in the following result.

\begin{figure} [h!]
\centering
\includegraphics[width=1\textwidth]{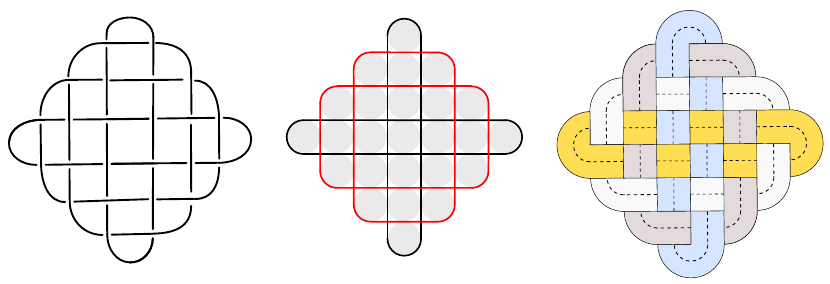}
\caption{Left: A diagram of the third element in the doormat family. Centre: A tightened disk diagram for the third element in the doormat family. Right: A critical ribbon diagram for the third element in the doormat family; minimality cannot be guaranteed due to \cite{schurmann1}, as for large values of $n$ there is no control over the positions of the centres of the disks in the diagram.}
\label{fig:door}
\end{figure}

\begin{theorem} \label{door} 
The $n$th element in the Salomon family is a ribbonlength critical diagram with ribbonlength
$$8 \sum_{k=1}^{n} k + (n+1)\pi.$$
\end{theorem}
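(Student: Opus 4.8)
The plan is to follow the same three-stage scheme used for the Salomon knot (Theorem~\ref{sal1}) and the Turk's head (Theorem~\ref{turk}): first reduce to a $cs$ representative, then tighten it by gradient descent to a lattice configuration, and finally read off both the length and the criticality from the resulting diagram. By Theorem~\ref{ribcs} it suffices to work with $cs$ curves, so I would start from a \emph{parallel} initial diagram in which each bounded region carries a unit-radius disk whose centre sits on the unit square lattice, exactly as in the rectangular family (Theorem~\ref{rect}). I would then exploit the recursive structure of the family: the $n$th element is obtained from the $(n-1)$th by adjoining one further alternating loop, so the diagram is naturally a union of $n+1$ simple loops $\gamma_1,\dots,\gamma_{n+1}$ woven in the lattice, no two of which share boundary (they only cross). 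Running the horizontal and vertical linear perturbations $p(t)=d_0 t + d_i(1-t)$ of Theorem~\ref{turk} drives every disk into maximal boundary contact while keeping the centres on the lattice, and the descent terminates at a lattice configuration whose loops bound convex rounded lattice regions.

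For the length I would compute loop by loop. Each $\gamma_j$ bounds a convex rounded lattice region, so its circular-arc part has total turning $2\pi$ and hence core arc-length exactly $2\pi$, contributing $\pi$ to the ribbonlength; summing over the $n+1$ loops gives the term $(n+1)\pi$. The straight part of each $\gamma_j$ is the perimeter of the underlying lattice polygon enclosing its disks, and in the nested weaving pattern these perimeters grow so that, after tightening, the straight contributions of the $n+1$ loops total $16\sum_{k=1}^{n}k$ in core length, i.e. $8\sum_{k=1}^{n}k$ in ribbonlength; the bookkeeping is a direct count of lattice edges along each loop. Adding the two contributions yields $8\sum_{k=1}^{n}k+(n+1)\pi$. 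As a check, for $n=1$ the diagram is two $1\times 3$ stadium loops of core length $8+2\pi$ each, recovering the value $8+2\pi$ of Theorem~\ref{sal1}.

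To establish criticality I would argue exactly as in Lemma~\ref{lemcomp}. At the tightened configuration, consider perturbing a single disk $D_i$ along an admissible curve with velocity $\mathbf{v}$; the first variation of length is $(\mathbf{u}_1+\mathbf{u}_2)\cdot\mathbf{v}$, where $\mathbf{u}_1,\mathbf{u}_2$ are the outward unit tangents at the arc--segment junctions on $D_i$. Because each disk is blocked on one side by a neighbouring disk (the distance-$2$ constraint) and pulled by the negative gradient on the other, the only admissible first perturbation $\mathbf{v}$ is orthogonal to $\mathbf{u}_1+\mathbf{u}_2$, so $(\mathbf{u}_1+\mathbf{u}_2)\cdot\mathbf{v}=0$ and the directional derivative vanishes. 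Checking this local balance at every disk and every corner of the diagram, and confirming that the separation bound holds at the resulting ribbon, shows that the diagram is ribbonlength critical.

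The main obstacle is twofold. First, the criticality verification must be carried out uniformly in $n$: one has to confirm that the local blocking picture of Lemma~\ref{lemcomp} recurs at every disk of an arbitrarily large diagram, which I expect to handle through the inductive description of the family rather than by case analysis. Second, and more essentially, this is precisely where criticality cannot be upgraded to minimality: by Sch{\"u}rmann~\cite{schurmann1} the least-perimeter enclosure of many unit disks need not be a sub-configuration of the square (or hexagonal) lattice, so for large $n$ there is no guarantee that the lattice loops realise the global minimum. Hence the theorem asserts only criticality, and the delicate part of the argument is verifying the vanishing first variation at each junction while explicitly acknowledging that global minimality is out of reach.
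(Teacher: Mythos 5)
Your proposal is correct and follows essentially the same route as the paper: reduction to $cs$ curves via Theorem~\ref{ribcs}, linear-perturbation gradient descent to a lattice-tight configuration as in Theorems~\ref{turk} and~\ref{rect}, criticality via the blocking/orthogonality argument of Lemma~\ref{lemcomp}, per-loop bookkeeping giving $16\sum_{k=1}^{n}k$ straight length plus $2\pi$ of arc per loop, and the Sch\"urmann obstruction explaining why only criticality, not minimality, is claimed. Your turning-number justification of the $2\pi$ arc contribution per loop and the explicit $n=1$ consistency check are slightly more detailed than the paper's pattern-based computation, but the argument is the same.
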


\begin{proof} 

We emphasise that we are claiming {ribbonlength criticality}, not global minimality. Each component of the diagram is locally length minimising under the separation constraint (see Figure \ref{fig:door}). Similar to Theorem \ref{rect}, by starting with a suitable $cs$ disk diagram, we can reduce it to a disk diagram in lattice-tight form by applying linear perturbations. Hence, the diagram is critical with respect to the ribbonlength functional by Lemma \ref{lemcomp}. 

The first element in this family is the Salomon knot, which consists of two components with a total length of $(4 \cdot 4) + 2 \cdot 2\pi = 16 + 4\pi$ (see Theorem \ref{sal1}). 

The length of the second element is calculated by adding $4 \cdot 8$ units for the rectilinear contribution of a new loop, plus an additional $2\pi$ from the corners (each corner adding $\pi/2$), giving a total length of $(4 \cdot 4) + (4 \cdot 8) + 3 \cdot 2\pi = 48 + 6\pi$.

Following the same pattern, the length of the third element is:
$(4 \cdot 4) + (4 \cdot 8) + (4 \cdot 12) + 4 \cdot 2\pi = 96 + 8\pi$. In general, the formula for the length of the $n$th element is:
$$4 \cdot 4 \sum_{k=1}^{n} k + 2\pi(n+1) = 16 \sum_{k=1}^{n} k + 2\pi(n+1).$$
Thus, the ribbonlength for the $n$th element is:
$$8 \sum_{k=1}^{n} k + (n+1)\pi.$$
\end{proof}
\section{Open Problems and Conjectures}
\label{sec:conjectures}

The following presents a 2-dimensional analogue of the central problem in geometric knot theory:

\begin{problem}\label{prob:tabulation}
Provide a tabulation of knots ordered by minimal ribbonlength.
\end{problem}

A common assumption is that minimal ribbonlength configurations occur in diagrams with minimal crossing number. However, this assumption requires careful examination:

\begin{conjecture}\label{conj:linear-bound}
The minimal ribbonlength $\Rib(K)$ of a ribbon knot $K$ is bounded above by a linear function of its crossing number $c(K)$:

\[ \Rib(K) \leq C\cdot c(K) \quad \text{for some constant } C>0. \]
\end{conjecture}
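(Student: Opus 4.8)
The statement is an upper bound, so it suffices to exhibit, for each knot $K$, a single ribbon realisation of its minimal-crossing diagram whose ribbonlength is at most $C\cdot c(K)$; the minimal ribbonlength $\Rib(K)$ is then automatically no larger. The plan is therefore constructive: fix a diagram $D$ of $K$ realising $c(K)$, regard $D$ as a $4$-valent planar graph with $c(K)$ vertices, $2c(K)$ edges and, by Euler's formula, $c(K)+1$ bounded complementary regions, and produce from $D$ a $cs$ disk diagram of controlled length to which Lemma \ref{lemcomp} and Theorem \ref{ribcs} apply.

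First I would embed $D$ in the unit square lattice, using the universal lattice diagrams of \cite{chaim} so that every crossing sits in its own cell and the strands joining consecutive crossings run along lattice edges. Placing one unit-radius disk in each bounded region and following the boundaries with stadium arcs and segments, exactly as in the rectangular family of Theorem \ref{rect}, yields a $cs$ core. A gradient descent of the kind used in Theorems \ref{sal1} and \ref{turk} then tightens this core to a lattice configuration whose criticality is certified by Lemma \ref{lemcomp}. Each crossing tile and each inter-crossing edge of the tightened diagram contributes a bounded amount to the core: the straight portions add $O(1)$ per lattice edge and the corner and crossing arcs add $O(1)$ per vertex.

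Summing these contributions gives a core of total length at most $(\text{const})\times(\#\text{edges}+\#\text{crossings})=O(c(K))$, whence $\Rib(K)\le C\cdot c(K)$. The rectangular family already exhibits the expected order of growth and pins down the constant: the $m\times n$ arrangement of Theorem \ref{rect} has ribbonlength $\pi(m+n)+8mn$ against $mn$ crossings, so the ratio $\Rib/c$ tends to $8$; the Salomon family of Theorem \ref{door} behaves likewise, its quadratic ribbonlength matching a quadratic crossing count. These families make it plausible that the optimal constant is close to $8$ and that a linear bound is of the right order.

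The hard part is the embedding step. What must be proved is not merely that $K$ has some lattice diagram --- this follows from \cite{chaim} --- but that it has one whose total core length is genuinely linear in $c(K)$, rather than the quadratic length that a careless grid or arc presentation produces: since the arc index satisfies $\alpha(K)\le c(K)+2$, a grid diagram of grid number $n$ represents $K$, yet its total segment length can be $\Theta(n^{2})=\Theta(c(K)^{2})$, giving only a quadratic bound. One must therefore find a diagram-specific lattice embedding in which inter-crossing arcs are kept short on average, a combinatorial problem about efficient grid drawings of $4$-valent planar graphs. Compounding this, the tightened core must satisfy the separation bound at every turn and crossing so that the disk-space minimiser is actually ribbon (Theorem \ref{ribcs}); verifying this uniformly near tightly packed crossings, where the obstruction phenomenon of \cite{schurmann1} can prevent the disks from lying on a clean lattice, is where I expect the real difficulty to lie.
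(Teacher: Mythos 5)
You are attempting to prove a statement that the paper itself does not prove: Conjecture~\ref{conj:linear-bound} sits in the open-problems section, and the author offers no argument for it whatsoever. There is therefore no paper proof to compare against, and your attempt has to stand on its own --- which it does not, for the reason you yourself identify in your final paragraph. The entire mathematical content of the conjecture is concentrated in the step you flag as ``the hard part'': producing, for an arbitrary minimal-crossing diagram $D$ of $K$, a lattice realisation whose \emph{total} core length is $O(c(K))$ rather than $O(c(K)^2)$. The citation of \cite{chaim} does not supply this: universal lattice diagrams give existence of lattice representations, with no per-knot length control, and as you note, grid or arc presentations generically cost $\Theta(c(K)^2)$. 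Worse, it is not clear the linear-length embedding even exists: realising a $4$-valent planar graph with edges along lattice lines is a VLSI-type layout problem, and bounded-degree planar graphs are known to require super-linear layout area (and hence super-linear total wire length) in general, so a per-edge $O(1)$ accounting cannot be expected to go through without using something special about knot diagrams. Everything else in your outline (gradient descent, Lemma~\ref{lemcomp}, stadium-curve accounting, Theorem~\ref{ribcs}) is machinery that only becomes applicable \emph{after} that embedding is produced, and the further requirement that the tightened disk-space minimiser satisfy the separation bound --- so that Theorem~\ref{ribcs} actually converts it into a ribbon diagram --- is asserted rather than verified. What you have written is a reduction of the conjecture to a combinatorial grid-drawing problem, which is a reasonable research plan, but not a proof.

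Two smaller corrections. First, your heuristic for the constant is off: in the $m\times n$ rectangular arrangement of Theorem~\ref{rect}, each horizontal loop meets each vertical loop in four crossings (two strands against two strands), so the diagram has $4mn$ crossings, not $mn$, and the ratio $\Rib/c$ tends to $2$, not $8$. Second, the appeal to \cite{schurmann1} near the end conflates two issues: Sch\"urmann's result obstructs \emph{certifying global minimality} of hexagonal-type packings for large $n$ (which is why the paper claims only criticality in Theorem~\ref{door}); it is not an obstruction to satisfying the separation bound, which is a local condition on the curve. For an upper bound you do not need minimality at all --- any admissible ribbon realisation suffices --- so the genuine difficulty is solely the linear-length embedding, not the packing-optimality question.
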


This would establish an efficient geometric bound purely in terms of a topological invariant.

\begin{problem}\label{prob:relation-rib-rope}
What is the relationship between ribbonlength and ropelength?

While both functionals measure length under a fixed width or thickness constraint, they apply in different ambient settings, planar versus spatial. For example, the ribbonlength of the trefoil knot is exactly $\Rib(\text{trefoil}) = 6 + 2\pi \approx 12.28$, while numerical simulations estimate its minimal ropelength to be $\Rop(\text{trefoil}) \approx 16.37$. Is there a unifying principle that connects these two quantities, or do they reflect fundamentally different geometric contexts? Clarifying this relation may lead to a deeper understanding of energy minimisation in knot theory.
\end{problem}

In broad terms, it would involve a long term, ambitious plan to explore whether these geometric
constraints could offer new insights into problems in classical knot theory.

{}


\begin{thebibliography}{}


\bibitem{ribbon1} J. Ayala, D. Kirszenblat, J.H. Rubinstein, \emph{Immersed flat ribbon knots}, \emph{Israel Journal of Mathematics}, to appear, \url{https://arxiv.org/abs/2005.13168}.

\bibitem{sullivan2} J. Cantarella, J.H.G. Fu, R.B. Kusner, J.M. Sullivan, N.C. Wrinkle, \emph{Criticality for the Gehring link problem}, \emph{Geometry \& Topology}, vol. 10, pp. 2055–2115, 2006.

\bibitem{sullivan1} J. Cantarella, R.B. Kusner, J.M. Sullivan, \emph{On the minimum ropelength of knots and links}, \emph{Inventiones Mathematicae}, vol. 150, pp. 257-286, 2002.


\bibitem{chaim} Chaim Even-Zohar, Joel Hass, Nati Linial, Tahl Nowik, \emph{Universal knot diagrams}, \emph{Journal of Knot Theory and Its Ramifications}, vol. 28, no. 07, p. 1950031, 2019. doi: 10.1142/S0218216519500317.

\bibitem{denne1} E. Denne, \emph{Folded ribbon knots in the plane}, \url{arXiv:1807.00691v1}.



\bibitem{kauffman} L.H. Kauffman, \emph{Minimal flat knotted ribbons}, in \emph{Series on Knots and Everything, Physical and Numerical Models in Knot Theory}, World Scientific Publishing, 2005, pp. 495-506.

 \bibitem{kusner1} Kusner, R., Kusner, W. A Gordian Pair of Links. Geom Dedicata 217, 47 (2023). https://doi.org/10.1007/s10711-023-00783-1


\bibitem{kauffman2} S.J. Lomonaco and L. Kauffman, \emph{Quantum knots and mosaics}, \emph{Quantum Information Processing}, vol. 7, no. 2-3, pp. 85-115, 2008.

\bibitem{grosberg} A. Grosberg and S. Nechaev, \emph{Algebraic invariants of knots and disordered Potts model}, \emph{Journal of Physics A: Mathematical and General}, vol. 25, no. 17, pp. 4659, 1992.


\bibitem{nechaev} S.K. Nechaev, \emph{Statistics of knots and entangled random walks}, World Scientific, 1996, \url{https://arxiv.org/abs/cond-mat/9812205}.



\bibitem{schurmann1}
A. Sch{\"u}rmann, ``On extremal finite packings,'' \emph{Discrete \& Computational Geometry}, vol. 28, no. 3, pp. 389--403, 2002.




\end{thebibliography}
\end{document}